\crefname{appsec}{Appendix}{Appendices}
\newtheorem{theorem}{Theorem}[section]
\newtheorem*{namedtheorem}{\theoremname}
\newcommand{\theoremname}{testing}
\newtheorem{thm}[theorem]{Theorem}
\newtheorem{lemma}[theorem]{Lemma}
\newtheorem{proposition}[theorem]{Proposition}
\newtheorem*{question*}{Question}
\theoremstyle{definition}
\newtheorem{remark}[theorem]{Remark}
\theoremstyle{plain}
\title{Universality and least singular values of random matrix products: a simplified approach}
\author{
    Rohit Chaudhuri\thanks{Email: {\tt  rohitchaudhuri@g.harvard.edu}.}
     \and
    Vishesh Jain \thanks {Email: {\tt  vishesh.vj@gmail.com}.}
      \and
   Natesh S. Pillai \thanks{Email: {\tt  pillai@fas.harvard.edu}.}
}
\date{}
\DeclareMathOperator{\Sparse}{Sparse}
\DeclareMathOperator{\Comp}{Comp}
\DeclareMathOperator{\Incomp}{Incomp}
\DeclareMathOperator{\op}{op}
\DeclareMathOperator{\dist}{dist}
\begin{document}
\maketitle
\global\long\def\R{\mathbb{R}}

\global\long\def\S{\mathbb{S}}

\global\long\def\Z{\mathbb{Z}}

\global\long\def\C{\mathbb{C}}

\global\long\def\Q{\mathbb{Q}}

\global\long\def\N{\mathbb{N}}

\global\long\def\P{\mathbb{P}}

\global\long\def\F{\mathbb{F}}

\global\long\def\U{\mathcal{U}}

\global\long\def\V{\mathcal{V}}

\global\long\def\E{\mathbb{E}}

\global\long\def\Ev{\mathscr{Rk}}

\global\long\def\Dg{\mathscr{D}}

\global\long\def\Ndg{\mathscr{ND}}

\global\long\def\Rv{\mathcal{R}}

\global\long\def\Gv{\mathscr{Null}}

\global\long\def\Hv{\mathscr{Orth}}

\global\long\def\Supp{{\bf Supp}}

\global\long\def\Sv{\mathscr{Spt}}

\global\long\def\ring{\mathfrak{R}}

\global\long\def\1{\mathbbm{1}}

\global\long\def\Bad{{\boldsymbol{B}}}

\global\long\def\supp{{\bf supp}}

\global\long\def\A{\mathcal{A}}

\global\long\def\L{\mathcal{L}}

\begin{abstract}
In this note, we show how to provide sharp control on the least singular value of a certain translated linearization matrix arising in the study of the local universality of products of independent random matrices. This problem was first considered in a recent work of Koppel, O'Rourke, and Vu, and compared to their work, our proof is substantially simpler and established in much greater generality . In particular, we only assume that the entries of the ensemble are centered, and have second and fourth moments uniformly bounded away from $0$ and infinity, whereas previous work assumed a uniform subgaussian decay condition and that the entries within each factor of the product are identically distributed.  

A consequence of our least singular value bound is that the four moment matching universality results for the products of independent random matrices, recently obtained by Koppel, O'Rourke, and Vu, hold under much weaker hypotheses. Our proof technique is also of independent interest in the study of structured sparse matrices. 
\end{abstract}

\section{Introduction}

Let $\boldsymbol{X}^{(1)},\boldsymbol{X}^{(2)},\dots,\boldsymbol{X}^{(M)}$ be mutually independent $n \times n$ (complex) random matrices with independent entries. In a recent work, Koppel, O'Rourke and Vu \cite{kopel2018random} studied the local universality of correlation functions associated with the product $\boldsymbol{X}:= \boldsymbol{X}^{(1)}\circ \dots \circ \boldsymbol{X}^{(M)}$, as well as the limits of linear spectral statistics of $\boldsymbol{X}$, under a four moment matching hypothesis, in the limit $n\to \infty$ while holding $M$ fixed; we refer the reader to \cite{kopel2018random} for an introduction to this area, as well as an extensive bibliography. The key technical contribution of their work was a lower bound on the smallest singular value of a certain translated linearization matrix associated with $\boldsymbol{X}$, which was established using a careful consideration of the interplay between the block structure of this translated linearization matrix, as well as the linear spaces spanned by the smallest singular vectors of the individual factor matrices $\boldsymbol{X}^{(i)}$. 

The goal of this note is to show how to establish such a bound on the smallest singular value in a very simple manner under considerably weaker assumptions than in \cite{kopel2018random} and only using (by now) completely standard arguments. Notably, unlike in \cite{kopel2018random}, the smallest singular values/vectors of the individual factor matrices play no role in our analysis; in fact, the only information we need about the factor matrices is a good bound on the \emph{largest} singular value, as well as a uniform anti-concentration assumption on the individual entries. Combining our singular value result with the proofs in \cite{kopel2018random} in a black-box manner shows that essentially all of the main results in \cite{kopel2018random} can be established in much greater generality. We also believe that our proof techniques should be more generally useful in the study of structured sparse matrices. These applications are discussed in more detail in \cref{subsec:applications}. 

\subsection{The smallest singular value of the translated linearized matrix}
Let $\boldsymbol{X}^{(1)},\dots, \boldsymbol{X}^{(M)}$ be $n\times n$ complex matrices, and consider the associated $Mn \times Mn$ \emph{linearization} block matrix $\boldsymbol{Y}$ given by
\[
\boldsymbol{Y}=
  \begin{bmatrix}
    \boldsymbol{0} & \boldsymbol{X}^{(1)} & \boldsymbol{0} & \dots & \dots & \boldsymbol{0}\\
     \boldsymbol{0} & \boldsymbol{0} & \boldsymbol{X}^{(2)}  & \dots & \dots & \boldsymbol{0}\\
     \boldsymbol{0} & \boldsymbol{0}  & \boldsymbol{0} & \boldsymbol{X}^{(3)} & \dots & \boldsymbol{0}\\
     \vdots & \vdots & \vdots & \vdots & \ddots & \vdots \\
     \boldsymbol{0} & \boldsymbol{0} & \boldsymbol{0} & \boldsymbol{0} & \dots & \boldsymbol{X}^{(M-1)}\\
     \boldsymbol{X}^{(M)} & \boldsymbol{0} & \boldsymbol{0} & \boldsymbol{0} & \dots & \boldsymbol{0}
  \end{bmatrix}
\]
The utility of $\boldsymbol{Y}$ in studying the product $\boldsymbol{X} = \boldsymbol{X}^{(1)}\circ \dots \circ \boldsymbol{X}^{(M)}$ goes back to the work of Burda, Janik, and Waclaw \cite{burda2010spectrum}, who observed that if $\lambda_1,\dots, \lambda_{n}$ are the eigenvalues of $\boldsymbol{X}$, then each $\lambda_{k}$ is an eigenvalue of $\boldsymbol{Y}^M$ with multiplicity $M$. 

For us, what will be useful is not $\boldsymbol{Y}$, but rather, a translation of it by a complex number of modulus approximately $\sqrt{n}$. More precisely, for $z \in \C$, we define the $Mn\times Mn$ matrix $\boldsymbol{Y}(z)$ by $\boldsymbol{Y} - z\boldsymbol{I}$, i.e. 
\[
\boldsymbol{Y}(z)=
  \begin{bmatrix}
    -z\boldsymbol{I} & \boldsymbol{X}^{(1)} & \boldsymbol{0} & \dots & \dots & \boldsymbol{0}\\
     \boldsymbol{0} & -z\boldsymbol{I} & \boldsymbol{X}^{(2)}  & \dots & \dots & \boldsymbol{0}\\
     \boldsymbol{0} & \boldsymbol{0}  & -z\boldsymbol{I} & \boldsymbol{X}^{(3)} & \dots & \boldsymbol{0}\\
     \vdots & \vdots & \vdots & \vdots & \ddots & \vdots \\
     \boldsymbol{0} & \boldsymbol{0} & \boldsymbol{0} & \boldsymbol{0} & \dots & \boldsymbol{X}^{(M-1)}\\
    \boldsymbol{X}^{(M)} & \boldsymbol{0} & \boldsymbol{0} & \boldsymbol{0} & \dots & -z\boldsymbol{I}
  \end{bmatrix}
\]
As noted in \cite{kopel2018random} (see the proof of Corollary 8 there), the top left $n\times n$ minor of $\boldsymbol{Y}(z)^{-1}$ is given by 
$$\left(\frac{1}{z^{M-1}}\boldsymbol{X} - z\boldsymbol{I} \right)^{-1},$$
so that a lower bound on the smallest singular value of $\boldsymbol{Y}(z)$ automatically gives a lower bound on the smallest singular value of $\boldsymbol{X} - z^{M}\boldsymbol{I}$.\\

Our main result is the following generalization of Theorem 7 in \cite{kopel2018random}. 
 \begin{thm}
 \label{thm:main}
 Fix $M \geq 1$, and for $k \in [M]$, let $\boldsymbol{X}^{(k)} = (\xi^{(k)}_{i,j})_{i,j}$ be $n\times n$ complex random matrices such that the random variables $\{\xi^{(k)}_{i,j}\}_{i,j,k}$ are mutually independent. Suppose that for all $i,j,k$,
 \begin{itemize}
 \item $\E[\xi_{i,j}^{(k)}] = 0$,
 \item $\E[|\xi^{(k)}_{i,j}|^{2}]\geq c_{2}$, and
 \item $\E[|\xi^{(k)}_{i,j}|^{4}] \leq C_4$,
 \end{itemize}
 where $c_2, C_4$ are positive constants.
For $z\in \C$, let $\boldsymbol{Y}(z)$ be the $Mn\times Mn$ matrix defined above, and let $\sigma_{1}(\boldsymbol{Y}(z))$ denote its smallest singular value. Then, there exist constants $A_{\ref{thm:main}}$ and $C_{\ref{thm:main}}$ depending only on $c_2, C_4, M$ such that for all fixed $A \in (0, A_{\ref{thm:main}})$, if $n^{1/2 - A/1000M}\leq |z| \leq n^{1/2 + A/1000M}$, then
$$\P\left(\sigma_{1}(\boldsymbol{Y}(z)) \leq n^{-1/2 - A}\right) \leq C_{\ref{thm:main}}n^{-A/1000M}.$$
\end{thm}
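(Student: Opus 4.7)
My plan is to apply the standard Rudelson--Vershynin framework for lower bounds on the least singular value, adapted to the block structure of $\boldsymbol{Y}(z)$. First, using the bounded fourth-moment assumption together with the bound $|z|\le n^{1/2+A/1000M}$, I would establish an operator-norm bound $\|\boldsymbol{Y}(z)\|_{\mathrm{op}}\le K\sqrt{n}$ with probability at least $1-O(n^{-A/1000M})$, for some constant $K=K(c_2,C_4,M)$, via standard truncation plus (say) Latala's inequality. I would then partition the unit sphere $S^{Mn-1}\subset\mathbb{C}^{Mn}$ into compressible vectors $\mathrm{Comp}(\delta,\rho)$ and incompressible vectors $\mathrm{Incomp}(\delta,\rho)$ and handle the two parts separately.

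For the compressible part, the key observation is that for every unit vector $v_0$, at least one block $v_0^{(k^\star+1)}$ satisfies $\|v_0^{(k^\star+1)}\|^2\ge 1/M$. The $k^\star$-th block of $\boldsymbol{Y}(z)v_0$ then equals $-zv_0^{(k^\star)}+\boldsymbol{X}^{(k^\star)}v_0^{(k^\star+1)}$, whose $n$ rows are independent (over $\boldsymbol{X}^{(k^\star)}$), each being a centered sum of independent random variables of variance at least $c_2/M$ and bounded fourth moment, plus a deterministic shift. Paley--Zygmund applied to $|\cdot|^2$, using the fourth-moment upper bound, shows that each such row exceeds a fixed positive constant in absolute value with probability bounded away from zero, and a Chernoff bound across the $n$ independent rows yields $\|\boldsymbol{Y}(z)v_0\|\ge c\sqrt n$ with probability at least $1-e^{-c'n}$ for one fixed $v_0$. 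Taking a sufficiently fine net over $\mathrm{Comp}(\delta,\rho)$, whose cardinality is at most $\exp(O(\delta Mn\log(1/\delta\rho)))$, and combining with the operator-norm bound, handles all compressible vectors simultaneously.

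For the incompressible part, I would invoke the RV invertibility-via-distance lemma to reduce the problem to bounding
$$\mathbb{P}\bigl(\mathrm{dist}(Y_j,W_j)\le\epsilon\bigr)$$
uniformly in $j$, where $Y_j$ is the $j$-th column of $\boldsymbol{Y}(z)$ and $W_j$ is the span of the remaining columns. For $j$ in block $k$, the random part of $Y_j$ is a single column of $\boldsymbol{X}^{(k-1)}$ sitting in block $k-1$, independent of $W_j$ after conditioning on all other entries. Writing $\hat n\in W_j^\perp$ for a unit normal, $\mathrm{dist}(Y_j,W_j)=|\langle Y_j,\hat n\rangle|$, and standard small-ball anti-concentration for sums of independent random variables with bounded fourth moment and variance bounded below yields an estimate proportional to $\epsilon/\|\hat n^{(k-1)}\|_2$, provided $\hat n^{(k-1)}$ is sufficiently spread out.

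The main obstacle is thus the structural claim that the normal vector $\hat n^{(k-1)}$ has a substantial incompressible component. In \cite{kopel2018random} this was obtained via control of the smallest singular vectors of the individual factor matrices $\boldsymbol{X}^{(k)}$; to avoid this, I would attempt a self-bootstrapping net argument exploiting the identities $\hat n^*\boldsymbol{Y}(z)^{(j)}=0$, which give chained relations $\hat n^{(k+1)*}=\hat n^{(k)*}\boldsymbol{X}^{(k)}/z$ among the blocks of $\hat n$ (with the cycle broken at $j$). Any compressible $\hat n^{(k-1)}$ would then encode $\hat n$ as a vector of very low structural complexity, which one can hope to exclude using only operator-norm bounds for the $\boldsymbol{X}^{(k)}$ and entrywise anti-concentration, via a further compressible-type net argument of the same flavor as the compressible step. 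This is the \emph{structured-sparse-matrix technique} alluded to in the abstract, and is where I expect the essential novelty of the argument to reside.
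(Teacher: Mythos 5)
Your proposal follows the same broad architecture as the paper --- operator norm via Lata{\l}a, compressible/incompressible split in the spirit of Rudelson--Vershynin, invertibility-via-distance --- and correctly identifies the central difficulty: one must show that a unit vector $u=(u^{(1)},\dots,u^{(M)})$ orthogonal to $Mn-1$ rows (or, dually, columns) of $\boldsymbol{Y}(z)$ has the relevant block bounded below in norm and, after normalization, incompressible. You also correctly spot the chained block relations $z u^{(j)} = \boldsymbol{X}^{(j)} u^{(j+1)}$ as the key structural tool. But precisely where the real work happens your argument trails off into ``one can hope to exclude'' via a further compressible-type net over the normal vector, and you offer no argument for the more basic prerequisite that $\|u^{(1)}\|_2$ itself is bounded below (without which your small-ball estimate $\epsilon/\|\hat n^{(k-1)}\|_2$ is vacuous). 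A naive net over low-complexity normals is not obviously strong enough here; it is precisely the hurdle that led \cite{kopel2018random} to analyze small singular vectors of the factors, and you give no replacement.

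The missing idea, which is the heart of the paper's \cref{thm:key}, is a short dichotomy on $u^{(M)}$. First, from the chain $u^{(j)} = (\prod_{h=j}^{M-1}z^{-1}\boldsymbol{X}^{(h)})u^{(M)}$, the fact that some block carries mass $\geq M^{-1/2}$, and operator-norm bounds alone (each $\|z^{-1}\boldsymbol{X}^{(h)}\|_{\op} = O(n^{\epsilon})$), one gets $\|u^{(M)}\|_2 = \Omega(n^{-O(\epsilon)})$. Then one splits on $\|\Tilde{\boldsymbol{I}}u^{(M)}\|_2$, the mass on the first $n-1$ coordinates. If this is not too small, the last $n-1$ row equations $z\Tilde{\boldsymbol{I}}u^{(M)} = \Tilde{\boldsymbol{X}}^{(M)}u^{(1)}$ together with $\|\Tilde{\boldsymbol{X}}^{(M)}\|_{\op} = O(n^{1/2+\epsilon})$ immediately force $\|u^{(1)}\|_2$ to be large. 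If it \emph{is} small, then $u^{(M)}$ is essentially a multiple of the \emph{fixed} vector $e_n$, and $u^{(1)} = (\prod z^{-1}\boldsymbol{X}^{(h)})u^{(M)}$ is essentially a multiple of $(\prod z^{-1}\boldsymbol{X}^{(h)})e_n$; now one only needs that the image of a fixed vector under each random factor has norm $\Omega(\sqrt n)$ with exponentially high probability --- a trivial consequence of tensorized entrywise anti-concentration, requiring nothing about singular vectors of the factors. This elementary case analysis, not a bootstrap net over the whole normal, is what buys the paper its generality; the incompressibility of $u^{(1)}/\|u^{(1)}\|_2$ is then a separate (and more targeted) net argument in which to each candidate compressible direction $y$ one associates a candidate $x_{y,k}$ for $u^{(M)}$ built from $\Tilde{\boldsymbol{X}}^{(M)}$ alone, and anti-concentration of $\boldsymbol{X}^{(1)}\cdots\boldsymbol{X}^{(M-1)}x_{y,k}$ around $z^{M-1}y$ rules out the whole net. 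Without the dichotomy and the explicit reduction to a fixed vector, your outline does not close.
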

\begin{remark}
In \cite{kopel2018random}, a version of this theorem is proved under the following two additional assumptions: for each fixed $k \in [M]$, the random variables $\{\xi^{(k)}\}_{i,j}$ are assumed to be i.i.d. (we only require independence), and the collection of centered random variables $\{\xi^{(k)}\}_{i,j}$ are assumed to be uniformly subgaussian (we only require a uniform upper bound on the fourth moment and a uniform lower bound on the second moment). We note that especially for the application to the universality of random matrix products, our moment assumptions are not restrictive, since in such applications, we have a moment matching assumption on the first four moments of the distributions $\{\xi^{(k)}\}_{i,j}$ anyway.
\end{remark}

By using the general strategy of Rudelson and Vershynin \cite{rudelson2008littlewood}, controlling the least singular value of $\boldsymbol{Y}(z)$ boils down to bounding from below the distance between the last row (say) of $\boldsymbol{Y}(z)$ and the span of the first $Mn-1$ rows. The key challenge here is to show that, with high probability, any unit vector orthogonal to the first $Mn-1$ rows of $\boldsymbol{Y}(z)$ has a nearly constant fraction of its $\ell_{2}$-mass concentrated in its first $n$ entries (note that there is \emph{some} block of $n$ entries containing at least $1/\sqrt{M}$ fraction of the $\ell_2$-mass) -- this is a precursor to being able to use any anti-concentration estimates to lower bound this distance. In contrast to \cite{kopel2018random}, where this challenge is overcome by considering the delicate interaction between the linear spaces spanned by the small singular vectors of the factor matrices, our proof of this key step (see part (1) of \cref{thm:key}) only makes use of the operator norm of the factor matrices, a trivial anti-concentration property of the entries of the factor matrices, and the cyclicness of the block structure of $\boldsymbol{Y}(z)$. 

In slightly more detail, let $u = (u^{(1)},\dots, u^{(M)}) \in \C^{Mn}$ denote a unit vector orthogonal to the first $Mn-1$ rows of $\boldsymbol{Y}(z)$. Then, by using the fact that the operator norm of each factor matrix is (with high probability) almost the same as the magnitude of $z$, and using the first $(M-1)n$ equations satisfied by $u$, we can easily show that $\|u^{(M)}\|_{2} = \Omega(n^{-\epsilon})$. To transfer this conclusion to $\|u^{(1)}\|_{2}$, we consider two cases: 
\begin{itemize}
\item If the first $n-1$ coordinates of $u^{(M)}$ contain a non-vanishing fraction of its $\ell_{2}$-mass, then using the same fact that the operator norm of each factor matrix is almost the same as the magnitude of $z$, along with the last $n-1$ equations satisfied by $u$, shows that $\|u^{(1)}\|_{2} = \Omega(n^{-O(\epsilon)})$. 
\item If the first case does not hold, then almost all of the $\ell_2$-mass of $u^{(M)}$ is concentrated on the last coordinate. But then, the first $(M-1)n$ equations satisfied by $u$, along with the fact that the image of any \emph{fixed} vector under any of the factor matrices has norm $\Omega(\sqrt{n})$ (which is almost the same as the magnitude of $z$) gives the desired conclusion. 
\end{itemize}
\subsection{Applications of \cref{thm:main}}
\label{subsec:applications}
As remarked earlier, \cref{thm:main} can be used to establish the main results in \cite{kopel2018random} under much more general conditions. As an illustration, one can generalize Theorem 3 in \cite{kopel2018random} as follows:
\begin{theorem}
\label{thm:linear-stat}
  Let $f\colon \C \to \R$ be a function with at least two continuous derivatives, supported in the \emph{spectral bulk} $\{z\in \C : \tau_0 < |z| < 1-\tau_0\}$ for some fixed $\tau_0 > 0$. Fix an integer $M \geq 1$, and let $n^{-M/2}\boldsymbol{X}^{(1)}\circ \dots \circ \boldsymbol{X}^{(M)}$ (with eigenvalues $\lambda_1,\dots,\lambda_n$) be a matrix product such that each factor is an $n\times n$ independent random matrix whose entries match the standard complex Gaussian distribution to four moments. Finally, suppose that the random variables $\{\xi_{i,j}^{(k)}\}_{i,j,k}$ are uniformly subexponential. Then, as $n\to \infty$, the centered linear statistic
 $$N_{n}[f] = \sum_{j}f(\lambda_j) - \sum_{j}\E[f(\lambda_j)] $$
 converges in distribution to the centered normal distribution with variance
 $$ \frac{1}{4\pi}\int_{|z| < 1}|\nabla f(z)|^{2} d^{2}z + \frac{1}{2}\|f\|^{2}_{H^{1/2}(|z|=1)}.$$
\end{theorem}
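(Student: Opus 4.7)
The plan is to apply the framework of Kopel, O'Rourke, and Vu (KOV) \cite{kopel2018random} essentially verbatim, substituting our \cref{thm:main} for their least singular value bound (Theorem 7 of \cite{kopel2018random}). The first step is to represent the centered linear statistic $N_n[f]$ as a functional of the resolvent of the linearization $\boldsymbol{Y}(z)$. Via a Helffer–Sj\"{o}strand representation (or equivalently a Cauchy integral against an almost-analytic extension $\tilde f$ of $f$), $N_n[f]$ is written as an integral of $\operatorname{Im}\operatorname{Tr}\bigl((\boldsymbol{X} - w^M\boldsymbol{I})^{-1}\bigr)$ against $\bar\partial \tilde f(w)$. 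Using the identity, recorded in the excerpt, that the top-left $n\times n$ block of $\boldsymbol{Y}(z)^{-1}$ is $(z^{-(M-1)}\boldsymbol{X} - z\boldsymbol{I})^{-1}$, this identifies $N_n[f]$ with a contour integral of functionals of $\boldsymbol{Y}(z)^{-1}$ at points $z$ with $|z|\approx n^{1/2}$, precisely the regime in which \cref{thm:main} controls $\sigma_1(\boldsymbol{Y}(z))$.

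Next I would run a Lindeberg-type swap at the level of $\boldsymbol{Y}(z)$: replace each entry $\xi^{(k)}_{ij}$ one at a time by a standard complex Gaussian entry, and bound the change in the relevant smoothed resolvent functional via a Taylor expansion to fifth order in that entry. Under the four-moment-matching hypothesis the contributions of the first four terms agree in expectation between the original and Gaussian distributions, and what remains is a fifth-order remainder. Each remainder term is a product of one high moment (controlled by the uniform subexponential hypothesis) and low powers of entries of $\boldsymbol{Y}(z)^{-1}$; the latter are controlled by the high-probability lower bound $\sigma_1(\boldsymbol{Y}(z))\ge n^{-1/2-A}$ from \cref{thm:main}, which makes $\|\boldsymbol{Y}(z)^{-1}\|_{\mathrm{op}}$ only polynomially large. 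Summing the remainders over the $O(Mn^2)$ swaps, and integrating against $\bar\partial \tilde f$ (which is supported away from $|z^M| \in \{0,1\}$ thanks to the bulk support of $f$), yields a negligible error.

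After the swap, every factor matrix has i.i.d.\ standard complex Gaussian entries, and the CLT with the claimed covariance $\frac{1}{4\pi}\int_{|z|<1}|\nabla f|^2\,d^2z + \tfrac{1}{2}\|f\|^2_{H^{1/2}(|z|=1)}$ for linear statistics of the Ginibre product in the bulk is exactly the Gaussian case of Theorem~3 of \cite{kopel2018random}, which is already proven there. Combining the Lindeberg swap estimate with the Gaussian CLT gives convergence in distribution of $N_n[f]$ to the stated Gaussian limit under our weaker hypotheses.

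The main obstacle is bookkeeping rather than new mathematics: one must verify, pass by pass through the KOV argument, that the \emph{only} ingredient that required uniform subgaussian decay or identical distribution within a factor was their Theorem~7, and that every other estimate (operator norm bounds on the factors, entrywise tail bounds, the Taylor remainder estimate, the Helffer–Sj\"{o}strand representation, and the Ginibre Gaussian CLT) uses only the existence of sufficiently many moments and the uniform subexponential hypothesis retained in \cref{thm:linear-stat}. Since our \cref{thm:main} is strictly more general than their Theorem~7, and since the remaining ingredients are robust to this weakening, this verification is routine and the theorem follows as a black-box corollary.
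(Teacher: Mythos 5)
Your proposal captures the paper's intended strategy: Theorem~\ref{thm:linear-stat} is obtained by substituting \cref{thm:main} for Theorem~7 of \cite{kopel2018random} and running the rest of their argument essentially unchanged, as a black-box combination. The paper itself does not spell out a proof; the remark following the statement simply asserts this replacement and delegates the remainder to \cite{kopel2018random}.

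One point where your account diverges from the paper's own framing is worth flagging. You attribute the uniform subexponential hypothesis to the moment bounds in the fifth-order Taylor remainder of the Lindeberg swap. The paper's remark instead attributes this hypothesis to Nemish's local $M$-fold circular law \cite{nemish2017local}, which is the local-law input needed to justify the replacement-to-Gaussian comparison and control the spectral measure on short scales; this is why the paper explicitly says that any weakening of the assumptions in Nemish's work would directly yield a corresponding weakening of the subexponential assumption in \cref{thm:linear-stat}. For the Lindeberg swap itself, finitely many moments (matched to four plus a bounded higher moment) would suffice; it is the local law, not the swap remainder, that forces subexponential tails. Aside from this attribution, your outline of the Girko/Helffer--Sj\"ostrand representation, the linearization identity, the four-moment swap, the role of $\sigma_1(\boldsymbol{Y}(z))$ in controlling the resolvent, and the Gaussian baseline is consistent with the intended argument, and the verification you describe in the final paragraph is indeed what the remark in the paper implicitly claims to be routine.
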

 Here, $\|\cdot \|_{H^{1/2}(|z|=1)}$ is the $1/2$-Sobolev norm of the function restricted to the unit disc (see Definition 2 in \cite{kopel2018random}).
\begin{remark}
In \cite{kopel2018random}, this theorem is proved under the additional assumptions that the entries of each factor matrix are i.i.d. and that the collection of all of the $Mn^{2}$ random variables is uniformly subgaussian. We note that the uniform subexponential assumption in \cref{thm:linear-stat} stems from the work of Nemish on the local $M$-fold circular law \cite{nemish2017local}; any weakening of the assumptions of this work directly lead to a corresponding improvement in \cref{thm:linear-stat}. Finally, we note that similar improvements as in \cref{thm:linear-stat} also hold for Theorems 5 and 6 in \cite{kopel2018random}.
\end{remark}

We also believe that the proof of our key technical proposition (\cref{thm:key}) is of interest in the study of independent random matrices with structured sparsity i.e. random matrices with independent entries such that a prescribed collection of entries are equal to $0$ almost surely. Examples of structured sparse matrices include (non-Hermitian) band and block-band matrices (see e.g. the  many references in \cite{jjlo2020}). As is the case here, the key technical challenge in bounding the smallest singular value of random matrices with structured sparsity is to show that any unit vector which is orthogonal to all but one row of the matrix has `substantial overlap' with the support of the remaining row. Our proof of \cref{thm:key} shows how, in certain cases, one may leverage the structure of the sparsity to obtain such a conclusion in a straightforward manner. Indeed, in upcoming work of Jana, Luh, O'Rourke and the second named author \cite{jjlo2020}, a similar idea as \cref{thm:key} is used to obtain a crucial least singular value estimate used to prove, for the first time, a circular law for random block band matrices with bandwidth $n^{1-\delta}$ (for an absolute constant $\delta > 0$).

\section{Proof of \cref{thm:main}}

\subsection{Estimates on the operator norm}
For any matrix $\boldsymbol{A}$, let $\|\boldsymbol{A}\|_{\op} $ denote its operator norm. We begin with the following bound on the expectation of the operator norm due to Lata{\l}a \cite{latala2005some}. 
 \begin{thm}[\cite{latala2005some}]\label{thm:latala}
For any $n\times n$ complex random matrix $\boldsymbol{X}:=(X_{ij})_{1\leq i,j \leq n}$ with independent centered entries, $$\mathbb{E}\|\boldsymbol{X}\|_{\op}\leq C\left(\max_{i}\sqrt{\sum_{j}\mathbb{E}|X_{ij}|^{2}}+\max_{j}\sqrt{\sum_{i}\mathbb{E}|X_{ij}|^{2}}+\sqrt[\leftroot{0}\uproot{3}4]{\sum_{i,j}\mathbb{E}|X_{ij}|^{4}}\right),$$
where $C$ is some universal constant.
\end{thm}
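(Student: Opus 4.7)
The plan is to use the noncommutative moment method. The first step is to replace the operator norm by a high Schatten norm: since $\boldsymbol{X}\boldsymbol{X}^{*}$ is positive semidefinite, $\|\boldsymbol{X}\|_{\op}^{2p} = \|\boldsymbol{X}\boldsymbol{X}^{*}\|_{\op}^{p} \leq \operatorname{tr}((\boldsymbol{X}\boldsymbol{X}^{*})^{p})$, so by Jensen's inequality
\[
\mathbb{E}\|\boldsymbol{X}\|_{\op} \;\leq\; \bigl(\mathbb{E}\operatorname{tr}((\boldsymbol{X}\boldsymbol{X}^{*})^{p})\bigr)^{1/2p}.
\]
One ultimately chooses $p$ of order $\log n$, so that the inevitable trace overhead $n^{1/2p}$ is absorbed into the universal constant $C$ in the final bound.

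The second step is to expand the trace as a sum over closed bipartite walks of length $2p$ on $K_{n,n}$, whose two vertex classes represent row and column indices:
\[
\operatorname{tr}((\boldsymbol{X}\boldsymbol{X}^{*})^{p}) \;=\; \sum_{\substack{i_{1},\dots,i_{p} \\ j_{1},\dots,j_{p}}} X_{i_{1}j_{1}}\,\overline{X_{i_{2}j_{1}}}\,X_{i_{2}j_{2}}\,\overline{X_{i_{3}j_{2}}}\cdots X_{i_{p}j_{p}}\,\overline{X_{i_{1}j_{p}}}.
\]
Because the entries are independent and centered, any term in which some edge $(i,j)$ is used exactly once vanishes in expectation; thus only walks in which every traversed edge is used at least twice contribute. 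Each such walk is encoded by a combinatorial \emph{shape} (a $2p$-step record of whether each step introduces a new vertex/edge or revisits an old one) together with an injective assignment of distinct row and column labels to the vertices it visits. For each shape one bounds the expected edge-product by repeatedly applying Cauchy--Schwarz to split higher moments into second moments, except at edges used $\geq 4$ times, where the fourth moment $\mathbb{E}|X_{ij}|^{4}$ must be used directly. Summing over choices of the distinct row/column labels then groups the second-moment contributions either by row or by column, yielding factors of $\max_{i}\sum_{j}\mathbb{E}|X_{ij}|^{2}$ and $\max_{j}\sum_{i}\mathbb{E}|X_{ij}|^{2}$, while the edges of multiplicity four or more are summed freely to yield $\sum_{i,j}\mathbb{E}|X_{ij}|^{4}$, matching the three terms in the target inequality.

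The main obstacle is the combinatorial enumeration and bookkeeping in the previous paragraph, which is the crux of Lata{\l}a's argument. Concretely, one must (i) classify shapes so that the contribution of each shape factors cleanly as a product of the three basic quantities, (ii) show that the number of shapes with a prescribed number of distinct vertices and prescribed "thick-edge" pattern is of Catalan type and hence at most $(Cp)^{O(p)}$, and (iii) verify that the vertex-label count for each shape is compatible with turning the second-moment products into powers of the row-max and column-max sums. The delicate point is isolating the "thick" edges (used $\geq 4$ times) that drive the fourth-moment term from the tree-like backbone of the walk that drives the row- and column-sum terms; handling walks with mixed behaviour — some edges used exactly twice and others many times — without overcounting is precisely where Lata{\l}a's contribution lies. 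Once these three terms are established uniformly in $p$ with only factors polynomial in $p$ as overhead, taking $p \asymp \log n$ in the Jensen reduction completes the proof with a universal constant.
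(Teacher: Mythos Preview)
The paper does not supply a proof of this theorem; it is quoted from Lata{\l}a \cite{latala2005some} and used as a black box, so there is no in-paper argument to compare against. Your outline is nonetheless a broadly faithful sketch of Lata{\l}a's actual proof: the reduction $\mathbb{E}\|\boldsymbol{X}\|_{\op} \leq \bigl(\mathbb{E}\operatorname{tr}((\boldsymbol{X}\boldsymbol{X}^{*})^{p})\bigr)^{1/2p}$, the expansion over closed bipartite walks, the vanishing of terms with a singly-traversed edge by independence and centering, and the eventual emergence of the row-sum, column-sum and fourth-moment quantities are all correct features of his argument.

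That said, your proposal is explicitly only a plan, and the one quantitative commitment you make is off in a way that matters. You write that the number of shapes is ``of Catalan type and hence at most $(Cp)^{O(p)}$''; but a bound of order $p^{cp}$, after extracting the $(2p)$-th root with $p \asymp \log n$, leaves a factor $(\log n)^{O(1)}$, which is not a universal constant and so does not give the theorem as stated. Lata{\l}a's combinatorics must (and does) deliver a total contribution of order $C^{p}$ times the appropriate product of the three quantities, so that the only overhead after the root is the harmless $n^{1/2p}=O(1)$ coming from the trace. The steps you label (i)--(iii) are precisely the heart of the matter and you correctly flag them as the obstacle, but as written a reader could not reconstruct the proof from your sketch without consulting the original paper---which is, in effect, exactly what the present paper does.
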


In particular, we see that for any  $n \times n$ complex random matrix $\boldsymbol{X}$ with independent centered entries $X_{i,j}$ satisfying $\E |X_{i,j}|^{4} \leq C_{4}$ for all $1\leq i,j \leq n$,   $\mathbb{E}\|\boldsymbol{X}\|_{\text{op}}=O_{C_4}(\sqrt{n})$. As an immediate corollary, we have the following. 
\begin{lemma}\label{thm:norm}
For each $i\in [M]$, let $\boldsymbol{X}^{(i)}$ be an $n\times n$ complex random matrix with independent centered entries with fourth moments bounded by $C_{4}$. Then, for a given $\epsilon_{0}>0$, except with probability at most $O_{C_{4}}(n^{-\frac{\epsilon_0}{32M}})$, 
$$\max \left\{\|\boldsymbol{X}^{(1)}\|_{\op},\dots,\|\boldsymbol{X}^{(M)}\|_{\op}\right\} = O_{C_4,M}\left(n^{\frac{1}{2}+\frac{\epsilon_{0}}{32M}}\right).$$ 
\end{lemma}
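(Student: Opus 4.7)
The plan is to deduce \cref{thm:norm} directly from Lata{\l}a's bound (\cref{thm:latala}) together with Markov's inequality and a union bound over the $M$ factor matrices. Since $M$ is fixed, none of these steps is delicate; the only minor issue is to check that the three terms appearing in Lata{\l}a's bound can each be controlled using only the fourth moment hypothesis, which is immediate from Jensen's (or Cauchy--Schwarz) inequality.

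First, I fix an index $i \in [M]$ and apply \cref{thm:latala} to $\boldsymbol{X}^{(i)} = (\xi^{(i)}_{j,k})_{j,k}$. By Jensen's inequality, $\E|\xi^{(i)}_{j,k}|^2 \le (\E|\xi^{(i)}_{j,k}|^4)^{1/2} \le C_4^{1/2}$ for every $j,k$. Hence each of the first two terms in \cref{thm:latala} is bounded by $\sqrt{n \cdot C_4^{1/2}} = O_{C_4}(\sqrt{n})$, and the fourth-root term is bounded by $(n^2 C_4)^{1/4} = O_{C_4}(\sqrt{n})$. Combining these estimates yields
$$\E\|\boldsymbol{X}^{(i)}\|_{\op} \le K\sqrt{n},$$
for some constant $K = K(C_4)$.

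Next, I would apply Markov's inequality. Setting the threshold to $K\,n^{1/2 + \epsilon_0/(32M)}$, one obtains
$$\P\!\left(\|\boldsymbol{X}^{(i)}\|_{\op} \ge K\, n^{1/2 + \epsilon_0/(32M)}\right) \le \frac{K\sqrt{n}}{K\, n^{1/2+\epsilon_0/(32M)}} = n^{-\epsilon_0/(32M)}.$$
Taking a union bound over the $M$ indices $i \in [M]$ multiplies this probability by the constant $M$, yielding a failure probability of $M \cdot n^{-\epsilon_0/(32M)} = O_{C_4,M}(n^{-\epsilon_0/(32M)})$. On the complementary event, all of the operator norms are simultaneously bounded by $K\, n^{1/2 + \epsilon_0/(32M)} = O_{C_4,M}(n^{1/2+\epsilon_0/(32M)})$, which is exactly the claim.

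There is no real obstacle here: the lemma is essentially a high-probability restatement of Lata{\l}a's expected operator norm bound, and the tail exponent $\epsilon_0/(32M)$ is simply the power of $n$ by which one overshoots the expectation in Markov's inequality. The only thing one needs to be careful about is that the fourth moment bound (rather than a subgaussian tail) is sufficient, but this is precisely what makes Lata{\l}a's theorem useful in this setting — it only requires second and fourth moment information on the entries, which matches the hypothesis of \cref{thm:main}.
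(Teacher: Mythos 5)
Your proof is correct and follows exactly the same route as the paper: Lata{\l}a's expectation bound, then Markov's inequality with threshold $n^{1/2+\epsilon_0/(32M)}$, then a union bound over the $M$ factors. You are slightly more explicit than the paper in verifying via Jensen's inequality that the fourth-moment hypothesis controls all three terms in Lata{\l}a's bound, but this is the same argument.
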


\begin{proof}
By Markov's inequality and \cref{thm:latala}, it follows that for each $i \in [M]$, 
$$\P\left(\|\boldsymbol{X}^{(i)}\|_{\op} \geq O_{C_4}\left(M\cdot n^{\frac{1}{2}+ \frac{\epsilon_0}{32M}}\right)\right) = O_{C_{4}}(M^{-1}\cdot n^{-\frac{\epsilon_0}{32M}}).$$
Taking the union bound over $i \in [M]$ gives the desired conclusion. \qedhere

\end{proof}    

\begin{remark}
The previous lemma, along with the triangle inequality, implies that for $z \in \mathbb{C}$ with $|z|\leq n^{1/2+\epsilon_0/32M} $, we have  
$\|\textbf{Y}(z)\| \leq O(n^{\frac{1}{2}+\frac{\epsilon_{0}}{32M}})$. 
\end{remark}
%

\subsection{Compressible and incompressible vectors}
Throughout the remainder of this paper, we set $N = Mn$. For any integer $d \geq 1$, 
let $\S_{\C}^{d-1}:=\{(x_1,\dots,x_d) \in \C^{d} : |x_1|^{2} + \dots +|x_d|^{2} = 1\}$ denote the set of unit vectors in $\C^{d}$ (equipped with the Euclidean norm).
We will need the following standard decomposition of $\S_{\C}^{d-1}$ due to Rudelson and Vershynin \cite{rudelson2008littlewood}. For parameters $a,b \in (0,1)$, let $\Sparse_{d}(a)$ denote the set of vectors in $\C^{d}$ with support of size at most $ad$, let $\Comp_{d}(a,b)$ denote the set of vectors in $\S_{C}^{d-1}$ which have Euclidean distance at most $b$ from $\Sparse_{d}(a)$, and let $\Incomp_{d}(a,b)$ denote the set of vectors in $\S_\C^{d-1}$ which are not compressible. From here on, we will drop the subscript $d$ when the underlying dimension is clear. Then, by the union bound, for any $a,b \in (0,1)$,    
\begin{align*}
\mathbb{P}\left\{\sigma_{1}(\boldsymbol{Y}(z))\leq n^{-1/2-A}\right\} 
&\leq \\ 
\mathbb{P}\left\{\inf_{v \in \Comp(a,b)}\|\boldsymbol{Y}(z)v\|_{2}\leq n^{-1/2-A}\right\} &+\mathbb{P}\left\{\inf_{u \in \Incomp(a,b)}\|\boldsymbol{Y}(z)u\|_{2}\leq  n^{-1/2-A}\right\}.
\end{align*}
We will bound each of these two terms using a separate argument. The bound on the first term is easier, and is discussed in the next subsection, following which we present the bound on the second term. 
\subsection{Compressible Case }
The following lemma shows that, with high probability, the image of a fixed vector under $\boldsymbol{Y}(z)$ is far away from any fixed vector. More precisely, we have: 
\begin{lemma}
\label{thm:anti}
 For any fixed $z \in \mathbb{C}$, $u \in \S_{\C}^{N-1}$, and $w \in \C^{N}$, $$\mathbb{P}\left\{\|\boldsymbol{Y}(z) u - w\|_{2} \leq c_{\ref{thm:anti}} \sqrt{n}\right\} =  O_{C_4,c_2}(\exp{(-c_{\ref{thm:anti}}n)}).$$ 
 Here, $c_{\ref{thm:anti}}$ is a constant depending only on $C_4, c_2$ and $M$. 
\end{lemma}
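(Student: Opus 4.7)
The plan is to exploit the block-cyclic structure of $\boldsymbol{Y}(z)$ to locate a single block of $\boldsymbol{Y}(z)u - w$ whose coordinates are independent, suitably non-degenerate sums; the result will then follow from standard small-ball estimates plus tensorization. Write $u = (u^{(1)}, \dots, u^{(M)}) \in \C^{Mn}$, with each $u^{(k)} \in \C^n$. By pigeonhole, there is some $k_0 \in [M]$ with $\|u^{(k_0)}\|_2^2 \geq 1/M$. Reading off the block structure of $\boldsymbol{Y}(z)$, the $(k_0-1)$-th block of $\boldsymbol{Y}(z)u - w$ (with the cyclic convention $0 \equiv M$) equals
$$\boldsymbol{X}^{(k_0-1)} u^{(k_0)} - z\,u^{(k_0-1)} - w^{(k_0-1)},$$
so it suffices to lower bound the $\ell_2$-norm of this single block by $c\sqrt{n}$ with overwhelming probability.

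Next, for each $i \in [n]$, set $Z_i := \sum_j \xi^{(k_0-1)}_{i,j} u^{(k_0)}_j$. By the independence across $j$, the mean-zero and moment assumptions, and $\|u^{(k_0)}\|_2^2 \geq 1/M$, I would check that
$$\E|Z_i|^2 \;\geq\; c_2/M, \qquad \E|Z_i|^4 \;\leq\; C'(c_2, C_4, M),$$
where the fourth-moment bound follows from a routine expansion (keeping only the diagonal contributions, since the entries are centered) plus $\sum_j |u^{(k_0)}_j|^4 \leq \|u^{(k_0)}\|_2^2 \leq 1$. Crucially, the $Z_i$ for distinct $i$ are independent since they involve disjoint rows of $\boldsymbol{X}^{(k_0-1)}$.

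Now for the small-ball step. Let $d_i := z\,u^{(k_0-1)}_i + w^{(k_0-1)}_i$, which is a fixed complex number. Using a symmetrization trick, let $Z'_i$ be an independent copy of $Z_i$; then $Z_i - Z'_i$ is symmetric with $\E|Z_i - Z'_i|^2 \geq 2c_2/M$ and a bounded fourth moment, so Paley--Zygmund applied to $|Z_i - Z'_i|^2$ gives a constant $\delta_0 = \delta_0(c_2, C_4, M) > 0$ with $\P(|Z_i - Z'_i| \geq 2\delta) \geq \delta_0$ for some $\delta = \delta(c_2, C_4, M) > 0$. If instead $\P(|Z_i - d_i| \leq \delta) \geq 1 - \epsilon$, then by independence of $Z_i, Z'_i$, $\P(|Z_i - Z'_i| \leq 2\delta) \geq (1-\epsilon)^2$, forcing $2\epsilon \geq \delta_0$. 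Choosing $\delta' = \delta_0/2$, we conclude $\P(|Z_i - d_i| \geq \delta) \geq \delta'$ for every $i$ and every fixed $d_i$.

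Finally, tensorize. The indicators $T_i := \1[|Z_i - d_i| \geq \delta]$ are independent Bernoullis with success probability at least $\delta'$, so by a standard Chernoff bound $\P\bigl(\sum_i T_i \leq \delta' n / 2\bigr) \leq \exp(-c'' n)$ for some $c'' = c''(\delta') > 0$. On the complementary event,
$$\|\boldsymbol{Y}(z)u - w\|_2^2 \;\geq\; \sum_{i=1}^n |Z_i - d_i|^2 \;\geq\; \delta^2 \cdot \delta' n / 2,$$
so taking $c_{\ref{thm:anti}}$ below $\delta\sqrt{\delta'/2}$ (and smaller than $c''$) gives the claim. The only real point to verify carefully is the shifted anti-concentration bound in step three, since the coordinates of $-zu^{(k_0-1)} - w^{(k_0-1)}$ can be arbitrarily large; the symmetrization argument sidesteps this cleanly and is the main (mild) technical point.
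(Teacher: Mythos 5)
Your proof is correct, and it takes a mildly different route from the paper's. Both proofs reduce to the same block of $\boldsymbol{Y}(z)u - w$ (you via explicit pigeonhole and the cyclic convention, the paper via a WLOG relabeling) and both finish by anti-concentration of each coordinate followed by tensorization over independent rows. The difference is where the Paley--Zygmund step is applied. The paper applies Paley--Zygmund (plus Markov) to the \emph{individual} symmetrized entries $\tilde\xi^{(k)}_{i,j}$ to show $\P(c^{-1} \geq |\tilde\xi_{i,j}| \geq c) \geq c$, and then invokes an external Littlewood--Offord-type lemma (Lemma 2.8 of \cite{jain2020note}) to transfer this entry-level anti-concentration to the row sums $\langle\text{row}, u^{(k_0)}\rangle$, before tensorizing via Lemma 2.2 of \cite{rudelson2008littlewood}. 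You instead work directly at the level of the row sums $Z_i = \sum_j \xi_{i,j} u^{(k_0)}_j$: you compute $\E|Z_i|^2 \geq c_2/M$ and $\E|Z_i|^4 = O_{C_4}(1)$ (the latter by the pairing expansion and $\sum_j |u_j|^4 \leq \|u\|_2^2 \leq 1$), then apply Paley--Zygmund to $|Z_i - Z_i'|^2$ and use the symmetrization trick to absorb the arbitrary shift $d_i = zu^{(k_0-1)}_i + w^{(k_0-1)}_i$, and finally tensorize via a direct Chernoff bound on the indicators $T_i$. This is more self-contained: it avoids the citation to \cite{jain2020note} entirely, at the small cost of a fourth-moment computation for $Z_i$ that the paper outsources. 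Both arguments give the claimed exponential bound, and your handling of the shift via symmetrization is exactly the right care to take.
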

\begin{proof}
Writing $u = (u^{(1)},\dots, u^{(M)})$, we may assume without loss of generality that $\|u^{(1)}\|_{2} \geq M^{-1/2}$. Moreover, since $\|\boldsymbol{Y}(z)u - w\|_{2} \geq \|\boldsymbol{X}^{(M)}u^{(1)}- z u^{(M)} - w^{(M)}\|_{2}$, it suffices to bound from above the probability that the latter quantity is small. 

For this, we begin by noting that since for all $i,j \in [n]$, $\xi^{(n)}_{i,j}$ is a random variable with second moment at least $c_2$ and fourth moment at most $C_4$, the Paley-Zygmund inequality implies that there exists some constant $c$ depending only on $c_2, C_4$ such that for all $i,j \in [n]$, $\Pr\left(|\tilde{\xi}^{(n)}_{i,j}| \geq c\right) \geq c$, where $\tilde{\xi}^{(n)}_{i,j}$ denotes the symmetrization of ${\xi}^{(n)}_{i,j}$. Using again the bound on the fourth moment, it follows from Markov's inequality that, in fact, $c$ may be taken to be sufficiently small (again, depending only on $c_2$ and $C_4$) so that
$$\Pr\left(c^{-1} \geq |\tilde{\xi}^{(n)}_{i,j}| \geq c\right) \geq c.$$
The desired conclusion now follows directly from Lemma 2.8 in \cite{jain2020note} along with the standard tensorization lemma (Lemma 2.2 in \cite{rudelson2008littlewood}). \end{proof}

Given this lemma, the compressible case is handled using a (by now) standard argument, which we reproduce here for the reader's convenience. At the crux of the argument is the low metric entropy of unit vectors in $\Comp_{N}(a,b)$. Indeed, as \cite{rudelson2008littlewood} shows:

\begin{lemma}\label{thm:net}
For $a, b \in (0,1/8)$,  $\Comp_{N}(a,b)$ admits a $2b$-net with cardinality at most:
$$\left(\frac{C_{\ref{thm:net}}}{ab}\right)^{2aN},$$
where $C_{\ref{thm:net}}>0$ is an absolute constant.
\end{lemma}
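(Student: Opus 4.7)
The plan is to follow the classical volumetric covering argument for compressible vectors originating in Rudelson--Vershynin. Since every $v \in \Comp_N(a,b)$ is within Euclidean distance $b$ of some sparse vector $v' \in \Sparse_N(a)$, and $\|v\|_2 = 1$, one automatically has $\|v'\|_2 \le 1+b \le 2$. So it suffices to construct a $b$-net for the set $\Sparse_N(a) \cap B(0, 1+b)$ of norm-bounded sparse vectors, and then invoke the triangle inequality: if $\mathcal{N}$ is a $b$-net for this set, and $v' \in \mathcal{N}$ satisfies $\|v - v'\|_2 \le 2b$, then $\mathcal{N}$ is a $2b$-net for $\Comp_N(a,b)$.

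To build the $b$-net for $\Sparse_N(a) \cap B(0, 1+b)$, I would decompose by support: $\Sparse_N(a) = \bigcup_{S \subseteq [N], |S| \le \lfloor aN\rfloor} \C^{S}$, where $\C^{S}$ denotes the coordinate subspace supported on $S$, which is a real vector space of dimension $2|S| \le 2aN$. For each such $S$, a standard volume-packing argument gives a $b$-net of $B(0, 1+b) \cap \C^S$ of cardinality at most $(3(1+b)/b)^{2|S|} \le (C'/b)^{2aN}$ for some absolute constant $C'$. The number of choices of $S$ is at most $\binom{N}{\lfloor aN \rfloor}$, which by the standard estimate is bounded by $(e/a)^{aN}$.

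Multiplying these bounds, the net has cardinality at most
\[
\left(\frac{e}{a}\right)^{aN} \cdot \left(\frac{C'}{b}\right)^{2aN} \le \left(\frac{C_{\ref{thm:net}}}{ab}\right)^{2aN},
\]
where the last inequality absorbs the factor $(e/a)^{aN}$ into the $(1/(ab))^{2aN}$ term by using $a, b \le 1/8$ (so that $1/(ab) \ge 64$ is large enough that a single absolute constant $C_{\ref{thm:net}}$ suffices). This gives the claimed bound.

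There is no real obstacle here beyond bookkeeping; the only mild subtlety is choosing the constant $C_{\ref{thm:net}}$ large enough to absorb the combinatorial factor $(e/a)^{aN}$ coming from the union over supports into the factor $(C/(ab))^{2aN}$ rather than leaving it as $(C/a)^{aN}(C/b)^{2aN}$, which is permitted precisely because the hypothesis $a,b \le 1/8$ makes $ab$ bounded away from $1$.
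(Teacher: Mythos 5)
Your proof is correct, and it is essentially the standard Rudelson--Vershynin volumetric argument that the paper itself simply cites from \cite{rudelson2008littlewood} without reproducing: decompose $\Sparse_N(a)$ by support, cover each coordinate ball by a volume-comparison net of size $(C'/b)^{2|S|}$ (the factor $2$ in the exponent accounting for the real dimension of $\C^S$), multiply by the $\binom{N}{\lfloor aN\rfloor}\le (e/a)^{aN}$ choices of support, and pass from a $b$-net of the sparse set to a $2b$-net of $\Comp_N(a,b)$ via the triangle inequality. The only bookkeeping step is the absorption of $(e/a)^{aN}$ into $(C/(ab))^{2aN}$, which works because the ratio reduces to requiring $e(C')^2 \le C^2/a$, automatically satisfied for $C \ge C'\sqrt{e}$ since $a<1$; your remark about $a,b\le 1/8$ is harmless but not actually needed for this absorption.
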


\begin{lemma}
\label{lemma:comp}
For every sufficiently small (depending only on $c_2, C_4, M$) $\epsilon_{0} > 0$, 
the following estimate holds for all $|z|\leq n^{1/2+\epsilon_0/32M}$, $a=1/\log(n)$ and for $b= n^{-\epsilon}$ (for all $\epsilon_0/16M < \epsilon < 100\epsilon_0$): $$\mathbb{P}\left\{\inf_{u \in \Comp(a,b)}\|\boldsymbol{Y}(z)u\|_{2}\leq c_{\ref{lemma:comp}}\sqrt{n}\right\}= O_{c_2, C_4, M}(n^{-\frac{\epsilon_{0}}{32M}}),$$
where $c_{\ref{lemma:comp}}$ is a constant depending only on $c_2, C_4, M$. 
\end{lemma}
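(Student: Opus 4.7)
The plan is to run the standard net-plus-anti-concentration argument for compressible vectors, combining \cref{thm:norm}, \cref{thm:anti}, and \cref{thm:net}. First I would condition on the good event $\mathcal{E} = \{\|\boldsymbol{Y}(z)\|_{\op} \leq C' n^{1/2 + \epsilon_0/32M}\}$; by \cref{thm:norm} together with the triangle inequality (using $|z| \leq n^{1/2 + \epsilon_0/32M}$), the complement of $\mathcal{E}$ has probability $O_{C_4,M}(n^{-\epsilon_0/32M})$, which is the target rate, so we can work on $\mathcal{E}$ throughout.

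Next, apply \cref{thm:net} with $a = 1/\log n$ and $b = n^{-\epsilon}$ to obtain a $2b$-net $\mathcal{N} \subset \Comp(a,b)$ of cardinality at most
\[
\left(\frac{C_{\ref{thm:net}} \log n}{n^{-\epsilon}}\right)^{2N/\log n} = \exp\!\Big( O(\epsilon N) + o(N) \Big) = \exp(O(\epsilon M n)).
\]
For each fixed $v \in \mathcal{N}$, \cref{thm:anti} (with $w = 0$) gives $\|\boldsymbol{Y}(z) v\|_2 \geq c_{\ref{thm:anti}} \sqrt{n}$ except with probability $O_{C_4,c_2}(\exp(-c_{\ref{thm:anti}} n))$. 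A union bound over $\mathcal{N}$ then yields
\[
\mathbb{P}\Big( \exists\, v \in \mathcal{N} : \|\boldsymbol{Y}(z) v\|_2 < c_{\ref{thm:anti}} \sqrt{n}\Big) \leq \exp(O(\epsilon M n) - c_{\ref{thm:anti}} n),
\]
which is exponentially small provided $\epsilon_0$ (and hence $\epsilon < 100 \epsilon_0$) is chosen sufficiently small depending on $c_2, C_4, M$.

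Finally, I would transfer the lower bound from $\mathcal{N}$ to all of $\Comp(a,b)$ by approximation. For any $u \in \Comp(a,b)$, pick $v \in \mathcal{N}$ with $\|u - v\|_2 \leq 2b = 2 n^{-\epsilon}$. On $\mathcal{E}$ intersected with the good event of the previous paragraph,
\[
\|\boldsymbol{Y}(z) u\|_2 \;\geq\; \|\boldsymbol{Y}(z) v\|_2 - \|\boldsymbol{Y}(z)\|_{\op} \cdot \|u-v\|_2 \;\geq\; c_{\ref{thm:anti}}\sqrt{n} - 2C' n^{1/2 + \epsilon_0/32M - \epsilon}.
\]
Since the hypothesis $\epsilon > \epsilon_0/16M > \epsilon_0/32M$ ensures the error term is $o(\sqrt n)$, the right hand side is at least $(c_{\ref{thm:anti}}/2)\sqrt{n}$ for $n$ large, which gives the claim with $c_{\ref{lemma:comp}} = c_{\ref{thm:anti}}/2$.

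There is no serious obstacle here: the argument is a textbook net argument, and the only thing to watch is that the three scale parameters balance correctly. Concretely, the number of net points blows up like $\exp(O(\epsilon n))$, the per-point failure probability is $\exp(-\Omega(n))$, and the approximation error is controlled by the operator-norm bound $n^{1/2+\epsilon_0/32M}$ against $b = n^{-\epsilon}$; the assumption $\epsilon_0/16M < \epsilon < 100\epsilon_0$ together with smallness of $\epsilon_0$ makes all three conditions hold simultaneously, and the dominant error in the final probability bound is the $O(n^{-\epsilon_0/32M})$ coming from \cref{thm:norm}.
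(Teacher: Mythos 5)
Your proposal is correct and follows essentially the same route as the paper: condition on the operator-norm event from \cref{thm:norm}, take the $2b$-net of $\Comp(a,b)$ from \cref{thm:net}, union-bound the per-point anti-concentration estimate from \cref{thm:anti}, and transfer to the whole set via the operator-norm bound, with the $O(n^{-\epsilon_0/32M})$ from \cref{thm:norm} dominating. The scale bookkeeping (net cardinality $\exp(O(\epsilon M n))$ against the $\exp(-c_{\ref{thm:anti}} n)$ tail, and the approximation error requiring $\epsilon > \epsilon_0/32M$, which is guaranteed by the hypothesis $\epsilon > \epsilon_0/16M$) matches the paper's.
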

\begin{proof}

Let
$$\mathcal{A}:=\left\{\|\boldsymbol{Y}(z)\|_{\op}= O_{C_4,M}\left(n^{\frac{1}{2} + \frac{\epsilon_0}{32M}}\right)\right\}.$$ 
Then, by \cref{thm:norm} and the triangle inequality,  $$\mathbb{P}(\mathcal{A})\geq 1-O_{C_4}(n^{-\frac{\epsilon_{0}}{32M}}).$$   
Therefore, we may restrict ourselves to the event $\mathcal{A}$.

For $(a,b) \in (0,1/8)$, let $\Omega_{a,2b}$ denote a $2b$-net of $\Comp_{N}(a,b)$. Then, by \cref{thm:net}, \cref{thm:anti}, and the union bound,   
$$ \P \left\{\inf_{u \in \Omega_{a,2b}}\|\boldsymbol{Y}(z)u\|_{2} \leq c_{\ref{thm:anti}}\sqrt{n} \right\} = O_{C_4, c_2}\left(\exp(-c_{\ref{thm:anti}}n)\exp\left(2aN\log\left(\frac{C_{\ref{thm:net}}}{ab}\right)\right)\right).$$
In particular, by taking $a = 1/(\log{n})$ and $b = n^{-\epsilon}$ (for all $\epsilon < \epsilon_1$, where $\epsilon_1 > 0$ depends only on $c_2, C_4, M$), it follows that
$$\P \left\{\inf_{u \in \Omega_{a,2b}}\|\boldsymbol{Y}(z)u\|_{2} \leq c_{\ref{thm:anti}}\sqrt{n} \right\} = O_{C_4, c_2}\left(\exp(-cn)\right),$$
where $c > 0$ depends only on $c_2, C_4, M$. 


By definition, for any $u \in \Comp(a,b)$, there exists some $u' \in \Omega_{a,2b}$ such that $\|u - u'\|_{2} \leq 2b$. Therefore, since we have restricted ourselves to the event $\mathcal{A}$, we have for all $n$ sufficiently large (depending on $c_2, C_4, M$) that 
\begin{align*}
    \|\boldsymbol{Y}(z) u\|_{2}
    &\geq \|\boldsymbol{Y}(z)u'\|_{2} - \|\boldsymbol{Y}(z)(u-u')\|_{2}\\
    &\geq c_{\ref{thm:anti}}\sqrt{n} - \|\boldsymbol{Y}(z)\|_{\op}\|u-u'\|_{2}\\
    &\geq c_{\ref{thm:anti}}\sqrt{n} - 2\|\boldsymbol{Y}(z)\|_{\op}b\\
    &\geq c_{\ref{thm:anti}}\sqrt{n}/2,
\end{align*}
provided that $b < n^{-\epsilon_0/16M}$,
which completes the proof. 
\end{proof}
\subsection{Incompressible Case}
The goal of this section is to prove the following.

\begin{lemma}
\label{lemma:incomp}
For every sufficiently small (depending only on $c_2, C_4, M$) $\epsilon_{0} \in (0,100)$, 
the following estimate holds for all $|z|\leq n^{1/2+\epsilon_0/32M}$, $a=1/\log(n)$ and for $b= n^{-\epsilon}$ (for all $\epsilon_0/4 < \epsilon < 100\epsilon_0$): $$\mathbb{P}\left\{\inf_{u \in \Incomp(a,b)}\|\boldsymbol{Y}(z)u\|_{2} \leq c_{\ref{lemma:incomp}}n^{-\epsilon_0}b^{5}(Mn)^{-1/2}\right\}= O_{c_2, C_4, M}(b^{2} + n^{-\epsilon_0/32M}),$$
where $c_{\ref{lemma:incomp}}$ is a positive constant depending only on $c_2, C_4, M$.
\end{lemma}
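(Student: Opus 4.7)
The overall plan is to combine the Rudelson--Vershynin invertibility-via-distance framework with the structural analysis for left-null vectors of $\boldsymbol{Y}(z)$ outlined after the statement of \cref{thm:main}. I would first invoke the standard reduction
\[
\mathbb{P}\!\left\{\inf_{u \in \Incomp(a,b)} \|\boldsymbol{Y}(z) u\|_{2} \leq \tfrac{tb}{\sqrt{N}}\right\} \leq \frac{2}{aN} \sum_{k=1}^{N} \mathbb{P}\!\left\{\dist(C_{k}, H_{k}) \leq t\right\},
\]
where $C_k$ denotes the $k$th column of $\boldsymbol{Y}(z)$ and $H_k = \operatorname{span}\{C_j\}_{j \neq k}$. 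Choosing $t = c\,n^{-\epsilon_{0}}b^{4}$ and $a = 1/\log(n)$ matches the target scale $c_{\ref{lemma:incomp}} n^{-\epsilon_0} b^5 (Mn)^{-1/2}$, so the proof reduces to showing $\mathbb{P}\{\dist(C_{k}, H_{k}) \leq t\} = O((b^{2} + n^{-\epsilon_{0}/32M})/\log n)$ for each $k$. The cyclic block structure of $\boldsymbol{Y}(z)$ makes every index $k$ equivalent up to relabeling, so it suffices to treat a single convenient choice, e.g.\ $k = Mn$.

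For this choice, the unit normal $v = (v^{(1)},\dots,v^{(M)}) \in \S_{\C}^{N-1}$ to $H_{Mn}$ satisfies $v^{*}C_{j}=0$ for $j \neq Mn$, which unfolds into the cyclic recursion $(\boldsymbol{X}^{(r)})^{*} v^{(r)} = \bar{z}\, v^{(r+1)}$ for $r \in \{M, 1, \ldots, M-2\}$, together with one \emph{partial} equation in row block $M-1$ affecting only the first $n-1$ coordinates. The distance then takes the explicit form
\[
\dist(C_{Mn},H_{Mn}) \,=\, \Bigl|\sum_{i=1}^{n} \bar{v}^{(M-1)}_{i}\,\xi^{(M-1)}_{i,n} \,-\, z\,\bar{v}^{(M)}_{n}\Bigr|,
\]
and crucially the coefficient vector $v^{(M-1)}$ is independent of the random variables $\{\xi^{(M-1)}_{i,n}\}_i$, since $v$ is measurable with respect to the columns of $\boldsymbol{Y}(z)$ other than $C_{Mn}$.

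The heart of the argument is the key structural estimate advertised after \cref{thm:main}: on an event of probability at least $1-O(n^{-\epsilon_{0}/32M})$, every such unit normal $v$ carries a nearly constant fraction of its $\ell_{2}$ mass in the block coupled to the removed column, i.e.\ $\|v^{(M-1)}\|_{2} \gtrsim n^{-O(\epsilon_{0})}$, and is moreover sufficiently spread in coordinates for small-ball bounds to apply. I would prove this by starting from the trivial lower bound $\|v^{(r_{*})}\|_{2} \geq M^{-1/2}$ valid for some $r_{*}$ and propagating it around the cycle using (i) the operator-norm control from \cref{thm:norm}, which guarantees $\|\boldsymbol{X}^{(r)}\|_{\op} = O(n^{1/2 + \epsilon_0/32M}) \approx |z|$, and (ii) the ``image of a fixed vector is large'' bound implicit in \cref{thm:anti}, ensuring $\|\boldsymbol{X}^{(r)} w\|_2 = \Omega(\sqrt n)$ with probability $1 - e^{-\Omega(n)}$ for each fixed unit $w$. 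The two-case split sketched in the introduction --- according to whether $v^{(M)}$ concentrates most of its mass on its last coordinate --- is precisely what is needed to transfer the bound across the partial equation in row block $M-1$, and a union bound over a suitable net of admissible $v$'s would handle uniformity.

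Granted the structural estimate, the final step is to condition on $v$ and apply a two-dimensional (complex) small-ball inequality to the linear combination above, using the Paley--Zygmund bound on individual entries established in the proof of \cref{thm:anti} together with the standard tensorization lemma. This yields a small-ball probability at scale $t$ of at most $C(t/\|v^{(M-1)}\|_{2})^{2} \leq C\,n^{O(\epsilon_{0})} t^{2}$, which with $t = c\,n^{-\epsilon_{0}}b^{4}$ is much smaller than $b^{2}$ once $\epsilon_{0}$ is taken sufficiently small relative to $\epsilon$. Adding the failure probability of the structural event gives the required bound $O(b^{2} + n^{-\epsilon_{0}/32M})$, up to a $\log n$ factor absorbed into constants by slightly worsening the exponent. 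I expect the main technical obstacle to be the structural estimate itself: in particular, controlling the exceptional regime where $v^{(M)}$ concentrates almost entirely on its last coordinate is exactly where \cref{thm:anti} and the cyclic structure must interact most delicately to salvage the missing equation in row block $M-1$.
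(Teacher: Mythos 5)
Your outline follows essentially the same route as the paper's: the Rudelson--Vershynin invertibility-via-distance reduction, a structural estimate showing that the relevant null vector has $\Omega(n^{-O(\epsilon_0)})$ mass in the block adjacent to the removed row/column and is spread out there, and a small-ball estimate at scale $t = c\,n^{-\epsilon_0} b^4$. The one cosmetic difference is that you work with columns of $\boldsymbol{Y}(z)$ while the paper works with rows; by the cyclic block structure this is just a transposition together with a relabeling (your $v^{(M-1)}$ plays the role of the paper's $u^{(1)}$, and your $v^{(M)}$ the role of $u^{(M)}$), so nothing is gained or lost. The structural estimate you flag as the main obstacle is exactly \cref{thm:key}; your two-case sketch (whether $v^{(M)}$ concentrates on its last coordinate) matches the paper's proof of part (1), and the incompressibility of $v^{(M-1)}/\|v^{(M-1)}\|_2$ (your ``sufficiently spread'') is part (2), which the paper proves via a double net over $\Comp_n(a,b)$ and over admissible last coordinates, using the independence of $\boldsymbol{X}^{(1)},\dots,\boldsymbol{X}^{(M-1)}$ from the constraint matrix. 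You correctly identify that a net argument is needed here, but you do not carry it out, and this is where most of the work lies.

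There is one step that would actually fail as written. You invoke a ``two-dimensional (complex) small-ball inequality'' to get a bound of order $(t/\|v^{(M-1)}\|_2)^2$. Under the paper's hypotheses the entries $\xi^{(k)}_{i,j}$ are only assumed to have $\E\xi = 0$, $\E|\xi|^2 \geq c_2$, $\E|\xi|^4 \leq C_4$; in particular they may be real-valued, in which case the random variable $\sum_i \bar v^{(M-1)}_i\xi^{(M-1)}_{i,n}$ can be supported on a one-real-dimensional affine subspace of $\C$ and the quadratic small-ball rate does not hold. The correct bound in this generality is the one-dimensional Esseen/Rogozin-type estimate (this is what the cited Lemma 2.10 of Livshyts and Proposition 2.7 of Jain give), which combined with incompressibility of $v^{(M-1)}$ yields $O(b^{-2}t + \exp(-\Omega(b^8 n^2/\log^2 n)))$. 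With $t = n^{-\epsilon_0} b^4$ this is still $O(b^2)$, so your final numerology survives the correction; but the bound you quote is not justified, and relying on it would hide the place where incompressibility (rather than mere norm lower bounds on $v^{(M-1)}$) is actually needed.
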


The first step in proving \cref{lemma:incomp} is the following `invertibility-via-distance' lemma due to Rudelson and Vershynin \cite{rudelson2008littlewood}. 

\begin{lemma}\label{lemma:distance}
Let $\dist_{k}$ denote the distance between the $k$-th row of $\boldsymbol{Y}(z)$ and the span of the remaining $Mn-1$ rows. Then, for any $(a,b) \in (0,1/4)$, and for any $\varepsilon' \geq 0$, 
$$\mathbb{P}\left\{\inf_{v \in \Incomp(a,b)}\|\boldsymbol{Y}(z)v\|_{2}\leq \varepsilon'b(Mn)^{-1/2}\right\}\leq \frac{1}{aMn}\sum_{k=1}^{Mn}\mathbb{P}\left\{\dist_{k}\leq \varepsilon'\right\}.$$ \end{lemma}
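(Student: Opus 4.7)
The plan is to apply the standard Rudelson-Vershynin invertibility-via-distance reduction, which I would carry out in three steps: a deterministic per-coordinate lower bound for $\|\boldsymbol{Y}(z) v\|_2$, a ``spread'' estimate for incompressible vectors, and a Markov-type averaging.

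For the deterministic step, I would phrase the pointwise bound in terms of column distances. Writing $\boldsymbol{Y}(z) v = \sum_k v_k C_k$ (where $C_k$ is the $k$-th column) and picking a unit vector $n_k \in \C^{Mn}$ orthogonal to $\mathrm{span}\{C_j : j \neq k\}$, one obtains $\|\boldsymbol{Y}(z) v\|_2 \geq |\langle n_k, \boldsymbol{Y}(z) v\rangle| = |v_k| \cdot \dist(C_k, \mathrm{span}\{C_j : j \neq k\})$ for every $k$. Since the lemma is stated in terms of the \emph{row} distances $\dist_k$ of $\boldsymbol{Y}(z)$, I would apply this reasoning to $\boldsymbol{Y}(z)^T$ (whose column distances are exactly the row distances of $\boldsymbol{Y}(z)$); the cyclic block symmetry of $\boldsymbol{Y}(z)$ makes $\boldsymbol{Y}(z)^T$ equal in distribution to $\boldsymbol{Y}(z)$ after a block-reversal permutation, so that the left-hand probability in the lemma is unaffected by this transposition.

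The spread estimate asserts that for every $v \in \Incomp(a,b)$, the set $\sigma(v) := \{k \in [Mn] : |v_k| \geq b/\sqrt{Mn}\}$ satisfies $|\sigma(v)| \geq aMn$: otherwise, zeroing out coordinates outside $\sigma(v)$ yields $\tilde v \in \Sparse(a)$ with $\|v - \tilde v\|_2^2 \leq Mn \cdot b^2/(Mn) = b^2$, contradicting $v \notin \Comp(a,b)$. Combining this with the deterministic step, the event $\{\exists v \in \Incomp(a,b) : \|\boldsymbol{Y}(z) v\|_2 \leq \varepsilon' b/\sqrt{Mn}\}$ forces $\dist_k \leq \varepsilon'$ for every $k \in \sigma(v)$, hence for at least $aMn$ indices in $[Mn]$. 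Applying Markov's inequality to the nonnegative integer-valued random variable $\sum_{k=1}^{Mn} \mathbf{1}[\dist_k \leq \varepsilon']$ (with expectation $\sum_k \Pr\{\dist_k \leq \varepsilon'\}$) bounds the probability of this event by $(aMn)^{-1} \sum_{k=1}^{Mn} \Pr\{\dist_k \leq \varepsilon'\}$, which is the claimed inequality. The key conceptual input is the spread-of-incompressibility estimate; the only mild bookkeeping subtlety is the row-versus-column conversion in the deterministic step, resolved by invoking the cyclic symmetry of the block structure of $\boldsymbol{Y}(z)$.
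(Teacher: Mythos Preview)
Your argument is the standard Rudelson--Vershynin proof, which is exactly what the paper invokes: the lemma is stated without proof and attributed to \cite{rudelson2008littlewood}. The deterministic per-coordinate bound, the spread estimate $|\sigma(v)|\geq aMn$ for $v\in\Incomp(a,b)$, and the Markov averaging are all correct and match the cited source.

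One caveat on your row/column fix. You assert that the block-reversal permutation makes $\boldsymbol{Y}(z)^{T}$ \emph{equal in distribution} to $\boldsymbol{Y}(z)$, but under the paper's hypotheses the entries $\xi^{(k)}_{i,j}$ are merely independent with common moment bounds, not identically distributed; the permuted transpose has the same block shape but need not have the same law, so the left-hand probability is not literally preserved. The cleaner resolution is that the inequality holds verbatim with $\|\boldsymbol{Y}(z)^{*}v\|_{2}$ on the left (since the columns of $\boldsymbol{Y}(z)^{*}$ are the conjugated rows of $\boldsymbol{Y}(z)$, your column-distance argument applies directly), and one then runs the entire compressible/incompressible split for $\boldsymbol{Y}(z)^{*}$ instead of $\boldsymbol{Y}(z)$. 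This is harmless because $\sigma_{1}(\boldsymbol{Y}(z))=\inf_{\|v\|=1}\|\boldsymbol{Y}(z)^{*}v\|_{2}$ and the compressible bound goes through unchanged for $\boldsymbol{Y}(z)^{*}$; this is evidently the reading the paper intends.
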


In the majority of the remainder of this subsection, we will estimate $\P\left\{\dist_{Mn} \leq \epsilon'\right\}$. The argument for other values of the index $k$ follows by purely notational changes. The key estimate we need to bound $\P\left\{\dist_{Mn} \leq \epsilon'\right\}$ is the following analog of Lemmas 22 and 23 in \cite{kopel2018random}. While our proof of the part corresponding to Lemma 23 broadly follows the proof in  \cite{kopel2018random}, we provide a completely elementary proof of the substantially more challenging Lemma 22 in \cite{kopel2018random}. Indeed, the elementary proof of this lemma is the source of most of our simplifications and allows us to remove many of the technical hypotheses in \cite{kopel2018random}. 

In the following, for a $k\times \ell$ matrix $\boldsymbol{A}$, we will use $\Tilde{\boldsymbol{A}}$ to denote the $(k-1) \times \ell$ matrix resulting from removing the last row of $\boldsymbol{A}$. 
\begin{proposition}\label{thm:key}
For every sufficiently small (depending only on $c_2, C_4, M$) $\epsilon_0 > 0$, and $t_0 = \epsilon_0/32M$, the following holds.  
Suppose that $n^{1/2-\epsilon_{0}/32M}\leq |z| \leq n^{1/2+\epsilon_{0}/32M}$, and let $u \in \S_{\C}^{N-1}$ be any unit vector orthogonal to the subspace spanned by the first $Mn-1$ rows of $\boldsymbol{Y}(z)$. Writing $u=(u^{(1)},\dots,u^{(M)}),$ where each $u^{(i)} \in \C^{n}$, we have,
\begin{enumerate}
\item Over the choice of $\boldsymbol{X}^{(1)}\dots \boldsymbol{X}^{(M-1)}, \Tilde{\boldsymbol{X}}^{(M)}$,
$$\mathbb{P}\left\{\min\{\|u^{(1)}\|_{2}, \|u^{(M)}\|_{2}\} = \Omega_{C_4,c_2,M}\left( n^{-\epsilon_{0}}\right)\right\} = 1 - O_{C_4,M}\left(n^{-\frac{\epsilon_{0}}{32M}}\right);$$
\item For $a = 1/\log{n}$ and $b = n^{-\epsilon}$ (for any $8Mt_0 < \epsilon < 100\epsilon_0$),
we have with probability at least $1 - O_{C_4,M}\left(n^{-\frac{\epsilon_{0}}{32M}}\right)$ that $u^{(1)}/\|u^{(1)}\|_{2}$ is a well defined element of $\Incomp_{n}(a,b)$. 
\end{enumerate}
\end{proposition}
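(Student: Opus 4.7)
The plan is to prove the two parts separately, with part (1) coming from the case analysis sketched in the introduction and part (2) following a standard net-based approach to kernel-vector incompressibility, suitably adapted to avoid invoking the smallest singular values of the individual factor matrices.

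For part (1), I work on the high-probability event $\mathcal{E}$ on which $\max_i \|\boldsymbol{X}^{(i)}\|_{\op} \leq C n^{1/2+t_0}$ (via \cref{thm:norm} and the union bound) and $\|\boldsymbol{X}^{(1)} \cdots \boldsymbol{X}^{(M-1)} e_n\|_2 \geq c n^{(M-1)/2}$ (a $1-o(1)$ event obtained by iterating a second-moment lower bound on $\|\boldsymbol{X}^{(i)} v\|_2$---itself a consequence of Paley--Zygmund applied coordinatewise together with Chebyshev across coordinates, using the second- and fourth-moment hypotheses). The first $(M-1)n$ block equations $zu^{(i)} = \boldsymbol{X}^{(i)} u^{(i+1)}$ iterate under the operator-norm bound to $\|u^{(i)}\|_2 \leq n^{2(M-i)t_0}\|u^{(M)}\|_2$, which combined with $\|u\|_2=1$ yields $\|u^{(M)}\|_2 \geq n^{-\epsilon_0/16}/\sqrt{M}$. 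For $\|u^{(1)}\|_2$ I case-split on $\|u^{(M)}_{[1:n-1]}\|_2$: if this is at least $n^{-\epsilon_0/8}\|u^{(M)}\|_2$, the equation $zu^{(M)}_{[1:n-1]} = \tilde{\boldsymbol{X}}^{(M)} u^{(1)}$ together with $\|\tilde{\boldsymbol{X}}^{(M)}\|_{\op} \leq n^{1/2+t_0}$ gives $\|u^{(1)}\|_2 \geq n^{-O(\epsilon_0)}$ directly; otherwise almost all of $u^{(M)}$'s mass sits on $e_n$, and $u^{(1)} = z^{-(M-1)}\boldsymbol{X}^{(1)}\cdots \boldsymbol{X}^{(M-1)} u^{(M)}$ is---up to a small, operator-norm-controlled error---equal to $u^{(M)}_n \cdot z^{-(M-1)}\boldsymbol{X}^{(1)}\cdots \boldsymbol{X}^{(M-1)} e_n$, whose norm is $\geq n^{-O(\epsilon_0)}$ on $\mathcal{E}$.

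For part (2), I argue by contradiction: assume $v := u^{(1)}/\|u^{(1)}\|_2 \in \Comp_n(a,b)$. The natural setup is that $u^{(1)}$ lies in the kernel of the $(n-1)\times n$ matrix $\boldsymbol{B} := \tilde{\boldsymbol{X}}^{(M)} - z^M \tilde{\boldsymbol{M}}_0$, where $\boldsymbol{M}_0 := (\boldsymbol{X}^{(1)}\cdots \boldsymbol{X}^{(M-1)})^{-1}$: combining $u^{(M)} = z^{M-1}\boldsymbol{M}_0 u^{(1)}$ with $zu^{(M)}_{[1:n-1]} = \tilde{\boldsymbol{X}}^{(M)} u^{(1)}$ yields $\boldsymbol{B} u^{(1)} = 0$. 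Conditional on $\boldsymbol{X}^{(1)},\ldots,\boldsymbol{X}^{(M-1)}$, the entries of $\boldsymbol{B}$ are independent (over $\tilde{\boldsymbol{X}}^{(M)}$) with variance $\geq c_2$ and central fourth moment $\leq C_4$, exactly the input for a Rudelson--Vershynin net argument. A $2b$-net $\mathcal{N}$ of $\Comp_n(a,b)$ of size $\leq (C/(ab))^{2an}$ (\cref{thm:net}), combined with anti-concentration of $(\tilde{\boldsymbol{X}}^{(M)} w)_i$ (via the small-ball estimate used in the proof of \cref{thm:anti}, with the deterministic shift absorbed) and tensorization, yields $\|\boldsymbol{B} w\|_2 \geq c_3\sqrt{n}$ uniformly over $w \in \mathcal{N}$ with probability $\geq 1 - \exp(-c_4 n)$ for $a = 1/\log n$ and $\epsilon$ in the stated range.

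The main obstacle is the net-approximation step: for $v \in \Comp_n(a,b)$ with $\boldsymbol{B} v = 0$ and $w \in \mathcal{N}$ satisfying $\|v-w\|_2 \leq 2b$, one has $\|\boldsymbol{B} w\|_2 \leq \|\boldsymbol{B}\|_{\op}\cdot 2b$, and one needs this to be smaller than $c_3\sqrt{n}$. The naive bound $\|\boldsymbol{B}\|_{\op} \leq \|\tilde{\boldsymbol{X}}^{(M)}\|_{\op} + |z|^M \|\tilde{\boldsymbol{M}}_0\|_{\op}$ would require a polynomial lower bound on $\sigma_{\min}(\boldsymbol{X}^{(1)}\cdots \boldsymbol{X}^{(M-1)})$ for the second term---but since the paper explicitly avoids invoking smallest-singular-value bounds on individual factor matrices, I expect the intended argument to proceed differently: perhaps exploiting the cyclic block structure of $\boldsymbol{Y}(z)$ to rephrase the compressibility event in terms of a matrix whose operator norm \emph{is} controlled by \cref{thm:norm} alone, or by a leave-one-out reduction that applies anti-concentration at the level of a single row of one of the factor matrices, in the spirit of part (1). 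The parameter range $8Mt_0 < \epsilon < 100\epsilon_0$ is chosen so that the resulting quantitative estimates close the argument.
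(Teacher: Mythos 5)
Your Part (1) is essentially the paper's argument: restrict to the event that the operator norms of $\boldsymbol{X}^{(1)},\dots,\boldsymbol{X}^{(M-1)},\tilde{\boldsymbol{X}}^{(M)}$ are $O(n^{1/2+t_0})$ and that $\|\boldsymbol{X}^{(1)}\cdots\boldsymbol{X}^{(M-1)}e_n\|_2 \gtrsim n^{(M-1)/2}$, iterate the block equations to obtain $\|u^{(M)}\|_2 \gtrsim n^{-\epsilon_0/16}$, and case-split on how much of the mass of $u^{(M)}$ lives off the $n$-th coordinate. Your thresholds differ cosmetically but the mechanism is the same.

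Part (2), however, has a genuine gap, and to your credit you have flagged it yourself without closing it. Forming $\boldsymbol{B} = \tilde{\boldsymbol{X}}^{(M)} - z^M \tilde{\boldsymbol{M}}_0$ with $\boldsymbol{M}_0 = (\boldsymbol{X}^{(1)}\cdots\boldsymbol{X}^{(M-1)})^{-1}$ and running a net argument on $\ker\boldsymbol{B}$ requires $\|\boldsymbol{B}\|_{\op}\cdot b \ll \sqrt{n}$, which demands a polynomial lower bound on $\sigma_{\min}(\boldsymbol{X}^{(1)}\cdots\boldsymbol{X}^{(M-1)})$; even invertibility of the partial product is not a priori guaranteed. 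This is exactly the least-singular-value input the paper is built to avoid, and neither of your two speculative fixes is what the paper does.

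The paper's device is to never invert the product, and to run the net over \emph{pairs} rather than over $v^{(1)}$ alone. Conditional on $\tilde{\boldsymbol{X}}^{(M)}$, a $2b$-net $\Omega_{a,2b}$ of $\Comp_n(a,b)$ determines, via the forward relation $\tilde{\boldsymbol{I}}x = \frac{1}{z}\tilde{\boldsymbol{X}}^{(M)}y$, the first $n-1$ coordinates of a candidate approximation $x$ to $v^{(M)}$; the undetermined last coordinate is covered by a second, polynomial-size net $\mathcal{N}$ of the disk $\{|\omega| \leq n^{\epsilon_0}\}$, which suffices because part (1) already forces $\|v^{(M)}\|_2 = O(n^{\epsilon_0})$. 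This yields a family $\{(y,x_{y,k})\}$, measurable with respect to $\tilde{\boldsymbol{X}}^{(M)}$ alone, small enough that a union bound survives against the exponentially small failure probability coming from \cref{thm:anti}. The remaining block equations $z^{M-1}v^{(1)} = \boldsymbol{X}^{(1)}\cdots\boldsymbol{X}^{(M-1)}v^{(M)}$ are then used only in the \emph{forward} direction: if $v^{(1)}$ were compressible, picking $y'$ near $v^{(1)}$ and the matching $x_{y',k}$ near $v^{(M)}$ forces $\|z^{M-1}y' - \boldsymbol{X}^{(1)}\cdots\boldsymbol{X}^{(M-1)}x_{y',k}\|_2 = O\bigl((\sqrt{n})^{M-1}n^{O(Mt_0)}b\bigr)$, using only the operator norms of the factors, which \cref{thm:norm} controls. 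This contradicts the lower bound $\Omega\bigl((\sqrt{n})^{M-1}n^{-t_0}\bigr)$ obtained by iterating \cref{thm:anti} over the partial product applied to the (fixed, $\tilde{\boldsymbol{X}}^{(M)}$-measurable) vectors $x_{y,k}$, as in the compressible case. The restriction $\epsilon > 8Mt_0$ is exactly what makes the operator-norm error term lose to this anti-concentration gain.
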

\begin{proof}
We begin by proving the first conclusion of the proposition. Let $\mathcal{A}_{1}$ and $\mathcal{A}_{2}$ be the events given by
\begin{align*}
\mathcal{A}_{1}&:=\left\{\max\left\{\|\boldsymbol{X}^{(1)}\|_{\op},\dots,\|\boldsymbol{X}^{(M-1)}\|_{\op}, \|\Tilde{\boldsymbol{X}}^{(M)}\|_{\op} \right\} \leq O_{C_4,M}\left(n^{\frac{1}{2}+\frac{\epsilon_{0}}{32M}}\right)\right\}\text{ and } \\
\mathcal{A}_{2}&:=\left\{\left\|\left(\frac{\boldsymbol{X}^{(1)}}{z}\right)\left(\frac{\boldsymbol{X}^{(2)}}{z}\right)\dots\left(\frac{\boldsymbol{X}^{(M-1)}}{z}\right){e}_{n}\right\|_{2}\geq \left(\frac{c_{\ref{thm:anti}}\sqrt{n}}{|z|}\right)^{M-1}\right\},
\end{align*}
where $e_{n}$ denotes the vector with $1$ in the $n^{th}$ coordinate and $0$ everywhere else. A simple iterative argument, using the mutual independence of the matrices $\boldsymbol{X}^{(i)}$, \cref{thm:anti} and the sequence of vectors (for $j = M-1,M-2,\dots,1$) given by
$$y^{(j)} := \left(\prod_{k=j}^{M-1} \boldsymbol{X}^{(k)}\right)e_{n}$$
shows that
$$ \P(\mathcal{A}_{2}) \geq 1 - O_{C_4, c_2, M}(\exp(-c_{\ref{thm:anti}}n)).$$

Moreover, from \cref{thm:norm}, $\mathcal{A}_1$ occurs with with probability at least $1-O_{C_4,M}\left(n^{-\frac{\epsilon_{0}}{32M}}\right)$. Hence, up to losing an additive factor of $O_{C_4,c_2,M}\left(n^{-\frac{\epsilon_0}{32M}}\right)$, we may restrict ourselves to the event 
\begin{align}
\label{eqn:A}
\mathcal{A} = \mathcal{A}_1 \cap \mathcal{A}_2.
\end{align}

Since, $u$ is normal to the span of the first $nM-1$ rows of $\boldsymbol{Y}(z)$ by assumption, we must have that
\begin{align}
\label{eqn:system}
zu^{(1)} &= \boldsymbol{X}^{(1)}u^{(2)} \nonumber\\
zu^{(2)} &= \boldsymbol{X}^{(2)}u^{(3)} \nonumber\\
\vdots \\
zu^{(M-1)} &= \boldsymbol{X}^{(M-1)}u^{(M)} \nonumber\\
z\Tilde{\boldsymbol{I}}u^{(M)}&=\Tilde{\boldsymbol{X}}^{(M)}u^{(1)}. \nonumber
\end{align}
In particular, we have for $1\leq j \le M-1$ that
$$u^{(j)}=\left(\prod_{h=j}^{M-1}\frac{1}{z}\boldsymbol{X}^{(h)}\right)u^{(M)}.$$ 
Moreover, since $u \in \S_{\C}^{N-1}$, there must exist some $j \in [M]$ such that $\|u^{(j)}\|_{2}\geq \frac{1}{\sqrt{M}}.$ Hence, we have 
\begin{align*}
\frac{1}{\sqrt{M}} 
& \leq
\|u^{(j)}\|_{2}\\
&\leq \left \|\prod_{h=j}^{M-1}\frac{1}{z}\boldsymbol{X}^{(h)}\right\|_{\op}\|u^{(M)}\|_{2}\\
 & = O_{C_4,M}\left(n^{\frac{1}{2} + \frac{\epsilon_0}{32M}}\right)^{M-j}|z|^{-M+j} \|u^{(M)}\|_{2}\\
 & = O_{C_4,M}\left(n^{\frac{\epsilon_0}{16}}\right)\|u^{(M)}\|_{2},
\end{align*}
so that
$$\|u^{(M)}\|_{2} = \Omega_{C_4,M}\left(n^{-\epsilon_0/16}\right).$$

From here on, we proceed by case analysis, depending on the value of $\|\Tilde{\boldsymbol{I}}u^{(M)}\|_{2}$.\\

\textbf{Case I: }$\|\Tilde{\boldsymbol{I}}u^{(M)}\|_{2} \geq n^{-\epsilon_0/2}$.  We have  $$\|\Tilde{\boldsymbol{X}}^{(M)}u^{(1)}\|_{2} \leq \|\Tilde{\boldsymbol{X}}^{(M)}\|_{\op}\|u^{(1)}\|_{2} = O_{C_4,M}\left(n^{\frac{1}{2} + \frac{\epsilon_0}{32M}}\right)\|u^{(1)}\|_{2},$$ so that $$\frac{|z| n^{-\epsilon_0/2}}{O_{C_4,M}\left(n^{\frac{1}{2} + \frac{\epsilon_0}{32M}}\right)}\leq \frac{\|z\Tilde{\boldsymbol{I}}u^{(M)}\|_{2}}{O_{C_4,M}\left(n^{\frac{1}{2} + \frac{\epsilon_0}{32M}}\right)} \leq  \|u^{(1)}\|_{2},$$
from which we see that 
$$\|u^{(1)}\|_{2} = \Omega_{C_4,M}\left(n^{-\frac{\epsilon_0}{2} - \frac{\epsilon_0}{16M}}\right).$$

\textbf{Case II: } $\|\Tilde{\boldsymbol{I}}u^{(M)}\|_{2} < n^{-\epsilon_{0}/2}$. Since $\| u^{(M)}\|_{2} = \Omega_{C_4,M}\left(n^{-\epsilon_0/16}\right)$, it follows that $|u^{(M)}_{n}| = \Omega(\|u^{(M)}\|_{2})$. 

Let 
$$ \boldsymbol{N}:= \left(\frac{\boldsymbol{X}^{(1)}}{z}\right)\left(\frac{\boldsymbol{X}^{(2)}}{z}\right)\dots\left(\frac{\boldsymbol{X}^{(M-1)}}{z}\right).$$

Then,
$$\| \boldsymbol{N}(\Tilde{\boldsymbol{I}}u^{(M)})\|_{2} = O_{C_4,M}\left(n^{\frac{1}{2} + \frac{\epsilon_0}{32M}}\right)^{M-1}|z|^{-M+1} n^{-\epsilon_0/2} = O_{C_4,M}(n^{-7\epsilon_0/16}).$$
On the other hand, since we have restricted ourselves to the event $A$, we have
$$\|\boldsymbol{N} (u^{(M)}_{n}e_{n})\|_{2} = \Omega_{C_4, c_2, M}\left(n^{-\epsilon_0/8}\right).$$

Hence, by the triangle inequality, 
$$\| u^{(1)} \|_{2} = \left\|\left(\frac{\boldsymbol{X}^{(1)}}{z}\right)\left(\frac{\boldsymbol{X}^{(2)}}{z}\right)\dots\left(\frac{\boldsymbol{X}^{(M-1)}}{z}\right)( u^{(M)})\right\|_{2} = \Omega_{C_4,c_2,M}\left(n^{-\epsilon_0/8}\right).$$
\vspace{.3cm}

We are now ready to prove the second part of the proposition. Throughout, we restrict ourselves to the event $\mathcal{B}$, which is the intersection of the event $\mathcal{A} = \mathcal{A}_1 \cap \mathcal{A}_2$ (from \cref{eqn:A}) and the event appearing in conclusion of the first part of the proposition. 

Let $u = (u^{(1)},\dots, u^{(M)})$ be as before, and let $v = u/\|u^{(1)}\|_{2}$. Note that restricted to the event $\mathcal{B}$, $\|u^{(1)}\|_{2} \neq 0$, so that this is well defined. Moreover, by definition, we have $\|v^{(1)}\|_{2} = 1$ and since we have restricted ourselves to the event $\mathcal{B}$, we also have $\|v^{(M)}\|_{2} = \|u^{(M)}\|/\|u^{(1)}\| = \Omega_{C_4, c_2, M} (n^{-\epsilon_0})$ and $\|v^{(M)}\|_{2} = O_{C_4, c_2, M}(n^{\epsilon_0})$.

Let $a,b$ be as in the statement of the proposition. We will now show using a net argument (similar to the one in \cite{kopel2018random}) that, restricted to the event $\mathcal{B}$, $v^{(1)} \notin \Comp_{n}(a,b)$ with sufficiently high probability. For this, let $\Omega_{a,2b}$ denote a $2b$-net of $\Comp_{n}(a,b)$; by \cref{thm:net}, we can choose $\Omega_{a,2b}$ to be such that
$$|\Omega_{a,2b}| \leq \left(\frac{C_{\ref{thm:net}}}{ab}\right)^{2an}.$$
Also, let $\mathcal{N}$ be a $b$-net of $\{z \in \C:|z| \leq n^{\epsilon_0}\}$; a simple volume argument shows that $\mathcal{N}$ can be chosen so that $$ l:=|\mathcal{N}|= O(n^{2\epsilon_0}).$$ 
Let $\omega_{1},\dots,\omega_{l}$ be an enumeration of the elements in $\mathcal{N}$. 

To each $y \in \Omega_{a,2b}$, we associate vectors $x_{y,1},\dots,x_{y,l} \in \C^{n}$ satisfying  
\begin{align}
\label{eqn:system-2}
\Tilde{I}x_{y,k}&=\frac{1}{z}\Tilde{\boldsymbol{X}}^{(M)}y \\
(x_{y,k})_{n} &= \omega_{k} \nonumber.
\end{align}
Note that the (random) vectors $x_{y,1},\dots,x_{y,l}$ depend only on $\Tilde{\boldsymbol{X}}^{(M)}$. In particular, these vectors are independent of $\boldsymbol{X}^{(1)},\dots,\boldsymbol{X}^{(M-1)}$. Let
$$\Omega'_{a,2b} = \{(y,x_{y,k}) : y\in \Omega_{a,2b}, k \in [l]\}.$$ 

We now define two auxiliary events. First, let $\mathcal{E}_{1}$ denote the event (depending on $\Tilde{\boldsymbol{X}}^{(M)}$) that for all $y \in \Omega_{a,2b}$ and $k\in [l]$, $$\|x_{y,k}\|_{2} = \Omega_{c_2, C_4, M} (n^{-t_0}).$$
By using \cref{eqn:system-2}, the proof of \cref{thm:anti}, and the union bound (which is possible after taking $\epsilon_0$ to be sufficiently small depending on $c_2, C_4, M$), it follows that $\mathcal{E}_1$ holds with probability at least $1 - O_{c_2, C_4, M}(\exp(-cn))$, where $c$ is a positive constant depending on $c_2, C_4, M$. 

Second, let $\mathcal{E}_2$ denote the event that 
$$\min_{(y,x) \in \Omega'_{a,b}}\|z^{M-1}y - \boldsymbol{X}^{(1)}\dots \boldsymbol{X}^{(M-1)}x\|_{2} = \Omega_{c_2, C_4, M}\left((c_{\ref{thm:anti}}\sqrt{n})^{M-1}n^{-t_0}\right).$$
Then, a similar argument as the one used to control the event $\mathcal{A}_2$ earlier in the proof shows that 
$$\mathbb{P}\left(\mathcal{E}_2 | \mathcal{E}_1\right) \geq 1 - O_{c_2,C_4,M}(\exp(-cn)),$$
for some positive constant $c > 0$ depending only on $c_2, C_4, M$. The upshot of the discussion so far is that after losing an additive error term of at most $O_{C_4, c_2, M}(n^{-\epsilon_0/32M})$ in our final probability bound, we may restrict ourselves to the event $\mathcal{B}\cap \mathcal{E}_{2}$.

Finally, suppose that $v^{(1)} \in \Comp(a,b)$. By definition, there exists $y' \in \Omega_{a,2b}$ such that $\|v^{(1)} - y'\|_{2} \leq 2b$. Moreover, by \cref{eqn:system,eqn:system-2}, we have that for all $k \in l$,
\begin{align*}
    \|\tilde{I}v^{(M)} - \tilde{I}x_{y',k}\|_{2} 
    &= \frac{1}{|z|}\|\tilde{\boldsymbol{X}}^{(M)}(y' - v^{(1)}) \|_{2}\\
    & = O_{C_4,M}(n^{2t_0}b).
\end{align*}
Furthermore, since $\|v^{(M)}\|_{2} = O_{C_4, c_2, M}(n^{\epsilon_0})$, this upper bound is also true for the absolute value of the last coordinate of $v^{(M)}$. Therefore, using the estimate in the previous display equation along with the definition of $\mathcal{N}$, it follows that there exists some $k \in [l]$ for which
$$\|v^{(M)} - x_{y',k}\|_{2} = O_{C_4, M}(n^{2t_0} b).$$
But then
\begin{align*}
    \|z^{M-1}y' - \boldsymbol{X}^{(1)}\dots \boldsymbol{X}^{(M-1)}x'_{y,k}\|_{2}
    &\leq \|z^{M-1}(y' - v^{(1)})\|_{2} + \|z^{M-1}v^{(1)} - \boldsymbol{X}^{(1)}\dots \boldsymbol{X}^{(M-1)}v^{(M)}\|_{2}\\
    &\quad + \|\boldsymbol{X}^{(1)}\dots \boldsymbol{X}^{(M-1)}(v^{(M)} - x'_{y,k}) \|_{2}\\
    &\leq |z|^{M-1}\cdot 2b + O_{C_4, M}((\sqrt{n})^{M-1}n^{\epsilon_0/32}n^{2t_0}b)\\
    &= O_{C_4,M}((\sqrt{n})^{M-1}n^{Mt_0 + 2t_0}b)\\
    &= O_{C_4, M}((\sqrt{n})^{M-1}n^{-2t_0})
\end{align*}
since $b \leq n^{-8Mt_0}$, which cannot happen on the event $\mathcal{E}_2$.
\end{proof}

We can now quickly prove \cref{lemma:incomp} and \cref{thm:main}.
 
 \begin{proof}[Proof of \cref{lemma:incomp}]
 Let $a,b$ be as in the statement of the lemma and let $\varepsilon' = c_{\ref{lemma:incomp}}b^{4}n^{-\epsilon_0}$, where $c_{\ref{lemma:incomp}}$ is a constant depending on $c_2, C_4, M$ to be determined later. We estimate $\P(\dist_{Mn} \leq \varepsilon')$. Let $u = (u^{(1)},\dots, u^{(M)}) \in \C^{Mn}$ denote any unit vector orthogonal to the first $Mn-1$ rows of $\boldsymbol{Y}(z)$, and note that
 $$\dist_{Mn} \geq |\langle X^{(M)}_{n}, u^{(1)}\rangle - z u^{(M)}_{n}|,$$
 so that it suffices to bound the probability that the latter quantity is smaller than $\varepsilon'$. By \cref{thm:key}, we know that except with probability at most $O_{C_4, M}(n^{-\epsilon_0/32M})$, $\|u^{(1)}\|_{2} = \Omega_{C_4, c_2, M}(n^{-\epsilon_0})$ and  $u^{(1)}/\|u^{(1)}\|_{2}  \in \Incomp_{n}(a,b)$. When this is satisfied, it follows from the proof of Lemma 2.10 in \cite{livshyts2019smallest} (see also Lemma 3.6 in \cite{rudelson2009smallest}) and from Proposition 2.7 in \cite{jain2020note} that
 \begin{align*}
 \P\left(|\langle X_{n}^{(M)}, u^{(1)}/\|u^{(1)}\|_{2}\rangle - z u_{n}^{(M)}/\|u^{(1)}\|_{2}| \leq b^{4}\right) 
 &= O_{C_4, c_2, M}(b^{-2}\cdot b^{4} + \exp(-\Omega_{C_4, c_2, M}(b^{8}n^{2}/\log^{2}n)))\\
 &= O_{C_4, c_2, M}(b^{2}),
 \end{align*}
 provided that $\epsilon_0 \leq 1/1000$,
 so that
 $$\P\left(|\langle X_{n}^{(M)}, u^{(1)}\rangle - z u_{n}^{(M)}| \leq \Omega_{C_4, c_2, M}(n^{-\epsilon_0}b^{4})\right) = O_{C_4, c_2,M}(b^2).$$
 Finally, \cref{lemma:distance} gives the desired conclusion. 
 \end{proof}
 
 \begin{proof}[Proof of \cref{thm:main}]
 Let $A_{\ref{lemma:comp}}$ and $A_{\ref{lemma:incomp}}$ be the \emph{a priori} upper bounds imposed on $\epsilon_0$ in the statements of \cref{lemma:comp} and \cref{lemma:incomp} respectively, and let $A_{\ref{thm:main}} = \min\{A_{\ref{lemma:comp}}, A_{\ref{lemma:incomp}}\}$. Fix $A \in (0,A_{\ref{thm:main}})$. We apply \cref{lemma:comp} and \cref{lemma:incomp} with $\epsilon_0 = A/10$, $a = 1/\log{n}$ and $b = n^{-\epsilon_0}$ to get that
 $$\P\left(\sigma_{1}(\boldsymbol{Y}(z)) \leq c n^{-6\epsilon_0}n^{-1/2}\right) = O_{c_2, C_4, M}(n^{-\epsilon_0/32M}), $$
 where $c$ is a constant depending only on $c_2, C_4, M$. Since for all $n$ sufficiently large, $cn^{-6\epsilon_0} > n^{-A}$ and $n^{-\epsilon_0/32M} < n^{-A/1000M}$, we have the desired conclusion. 
 \end{proof}

\bibliographystyle{amsalpha}
\bibliography{biblio.bib}

\end{document}